\newcommand{\TT}{\mathbb{T}}
\newcommand{\PP}{\mathbb{P}}
\newcommand{\NN}{\mathbb{N}}
\newcommand{\ZZ}{\mathbb{Z}}
\newcommand{\G}{\mathscr{G}}
\renewcommand{\P}{\mathcal{P}}
\newcommand{\K}{\mathcal{K}}
\newcommand{\x}{\mathbf{x}}
\newcommand{\y}{\mathbf{y}}
\renewcommand{\t}{\mathbf{t}}
\newcommand{\reg}{\operatorname{reg}}
\newcommand{\set}[1]{\left \{ #1 \right \}}
\newcommand{\ts}{\textstyle}
\renewcommand{\ss}{\scriptstyle}
\newtheorem{theorem}{Theorem}[section]
\newtheorem{cor}[theorem]{Corollary}
\newtheorem{prop}[theorem]{Proposition}
\newtheorem*{claim*}{Claim}
\theoremstyle{definition}
\newtheorem{definition}[theorem]{Definition}
\newtheorem{rmk}[theorem]{Remark}
\newtheorem{examp}[theorem]{Example}
\begin{document}

\title[Joins, Ears and Castelnuovo--Mumford regularity]%
{Joins, Ears and Castelnuovo--Mumford regularity}

\author{J.~Neves}
\address{\emph{J.~Neves}: CMUC, Department of Mathematics, University of Coimbra, 3001-501 Coimbra, Portugal.}
\email{neves@mat.uc.pt}
\thanks{This work was partially supported by the Centre for Mathematics of the University of Coimbra -- UID/MAT/00324/2019, 
funded by the Portuguese Government through FCT/MEC and co-funded by the European Regional Development Fund through the Partnership Agreement PT2020.}

\author{M.~Vaz Pinto}
\address{\emph{M.~Vaz Pinto}: CAMGSD, Departamento de Matem\'atica, Instituto Superior T\'ecnico, Universidade de Lisboa, Avenida Rovisco Pais, 1, 1049-001 Lisboa, Portugal.}
\email{vazpinto@math.tecnico.ulisboa.pt}
\thanks{The second author was partially supported by the Center for Mathematical Analysis, Geometry and Dynamical Systems of Instituto Superior T\'ecnico, Universidade de Lisboa, funded by FCT/Portugal through UID/MAT/04459/2013}

\author{R.~H.~Villarreal}
\address{\emph{R.~H.~Villarreal}: 
Departamento de Matem\'aticas, Centro de Investigaci\'on y de Estudios Avanzados del IPN, Apartado Postal 14--740, 07000 Mexico City, D.F., Mexico.}
\email{vila@math.cinvestav.mx}

\keywords{Castelnuovo--Mumford regularity, Binomial ideal, maximum vertex join number, ear decompositions}
\subjclass{13F20; 13P20; 05E40; 05C70}

\begin{abstract}
We introduce a new class of polynomial ideals associated to a simple graph, $G$. 
Let $K[E_G]$ be the polynomial ring on the edges 
of $G$ and $K[V_G]$ the polynomial ring on the vertices of $G$. We associate to $G$  
an ideal, $I(X_G)$, defined as the preimage of $(x_i^2-x_j^2 : i,j\in V_G)\subseteq K[V_G]$ by 
the map $K[E_G]\to K[V_G]$ which sends a variable, $t_e$, associated 
to an edge \mbox{$e=\set{i,j}$,} to the product $x_ix_j$ of the variables associated to its vertices.
We show that $K[E_G]/I(X_G)$ is a one-dimensional, Cohen-Macaulay, graded ring, that 
$I(X_G)$ is a binomial ideal and that, with respect to a fixed monomial order, its initial 
ideal has a generating set independent of the field $K$. We focus on the Castelnuovo--Mumford
regularity of $I(X_G)$ providing the following sharp upper and lower bounds:
$$
\mu(G) \leq \reg I(X_G) \leq |V_G|-b_0(G)+1,
$$
where $\mu(G)$ is the maximum vertex join number of the graph and $b_0(G)$ is the number of its connected components.
We show that the lower bound is attained for a  
bipartite graph and use this to derive a new combinatorial result on the number of even length ears of 
nested ear decomposition. 
\end{abstract}
\maketitle

\section{Introduction}
The study of polynomial ideals associated to combinatorial structures, ex\-plor\-ing
relations between algebraic and combinatorial invariants, has been a source for many new results. 
In the case of graphs, these ideals include, but are not limited to, 
the \emph{toric ideal}, the \emph{edge ideal} and  \emph{binomial edge ideal}, and, in this framework, 
one of the algebraic invariants that has been the object of 
growing interest is the Castelnuovo--Mumford regularity. 
Bounds for the regularity of the toric ideal have been obtained in \cite{BiO'KVT17,HaBeO'K19}. 
These bounds involve the number and sizes of families of disjoint induced complete bipartite subgraphs of the graph.
By \cite{Ka06}, the regularity of the edge ideal is bounded below by the induced matching number plus $1$, 
and, by \cite{Wo14}, it is bounded above by the co-chordal number plus $1$.
Several classes of graphs for which regularity of the edge ideal attains one, or both, of these bounds have been studied 
--- see \cite{BeHaTr15,  HaVT08, HiHiKiOK15, HiHiKiTs16, Hig17, KhMo14, MaMoCrRiTeYa11, VT09, Wo14}.  
Refinements of these bounds were recently obtained in \cite{SFYa18}.
In \cite{MaMu13}, it is shown that the regularity of the binomial edge ideal is bounded below by the 
length of the longest induced path of the graph plus $1$ and above by the number of its vertices. It is conjectured that the number of maximal
cliques of the graph plus $1$ is an upper bound for the regularity of the binomial edge ideal --- see \cite{KiSM13,KiSM16}.
See also \cite{EnZa15, KiSM16, KiSM18} for values of the regularity of the binomial edge ideal for specific classes of graphs.
\smallskip

In this article we define a new class of polynomial ideals associated to graphs and study 
their Castelnuovo--Mumford regularity. Let $G$ be a simple graph on a finite vertex set, 
$V_G\subseteq \NN$, without isolated vertices. Let its edge set be denoted by $E_G$, 
let $K$ be a field and let $K[V_G]$ and $K[E_G]$ denote the polynomial rings 
$$
K[V_G]=K[x_i : i\in V_G],\quad K[E_G] = K[t_e : e\in E_G],
$$
associated to the vertex and edge set, respectively.
Let $\theta \colon K[E_G]\to K[V_G]$ be the ring homomorphism defined by
$t_e\stackrel{}{\mapsto} x_ix_j$, for every $e=\set{i,j}\in E_G$.
In particular, recall, the toric subring of $G$ is the image of $\theta$ and 
the toric ideal of $G$ is $\ker \theta$. 
\begin{definition}\label{def: the ideals}
Let $I(X_G)\subseteq K[E_G]$ be given by $I(X_G)=\theta^{-1}(x_i^2-x_j^2 : i,j\in V_G)$.
\end{definition}

\noindent
Since $\theta$ is graded and the ideal 
$(x_i^2-x_j^2 : i,j\in V_G)$ 
is homogeneous, $I(X_G)$ is also a homogeneous ideal. 
In fact, we will show that $I(X_G)$ is generated by homogeneous binomials and that 
with respect to a fixed monomial order its initial ideal has a generating set independent 
of the field. The main results of this article reveal a strong connection between the
Castelnuovo--Mumford regularity of $I(X_G)$ and the maximum vertex join number of $G$.

\begin{definition}{\cite{SoZa93,Fr93}}\label{def: max vertex join number}
The maximum vertex join number of $G$  
is the ma\-xi\-mum cardinality of $J\subseteq E_G$ satisfying 
$|J\cap E_C|\leq |E_C|/2$, for every circuit $C$ in $G$.
\end{definition}

\noindent
Following \cite{Fr93}, we will denote the maximum vertex join number by $\mu(G)$. 
Let $b_0(G)$ denote the number of connected components of $G$. Then, 
by Theorem~\ref{thm: reg bigger than join} and Proposition~\ref{prop: general upper bound for reg}, proved in this article, 
the following bounds hold, for any graph:
$$
\mu(G)\leq \reg I(X_G) \leq |V_G|-b_0(G)+1.
$$

\noindent
Moreover, by Theorem~\ref{thm: reg for bipartites is max join}, if $G$ is bipartite, then $\reg I(X_G) = \mu(G)$.
This relation and the results of \cite{Ne}, on the regularity of the vanishing ideal over a graph endowed
with \emph{nested} ear decomposition, yield a new combinatorial result (see Corollary~\ref{cor: new combinatorial corollary}).
\smallskip

The motivation for the definition of $I(X_G)$ comes from the 
notion of vanishing ideal over a graph for a finite field, 
introduced by Renteria, Simis and Villarreal in \cite{ReSiVi11}. We will see,
in Proposition~\ref{prop: Link to vanishing ideals}, that the two ideals coincide 
when $K=\ZZ_3$. 
It is this relation and the existence of a set of generators of the initial ideal 
of $I(X_G)$ independent of the field that allow transferring to $I(X_G)$ 
the known properties and values of the regularity 
of the vanishing ideal over a graph.
\smallskip

This article is organized as follows. In Section~\ref{sec: the properties} we will study the basic properties of 
$I(X_G)$. We start by showing that $K[E_G]/I(X_G)$ is a one-dimensional Cohen--Macaulay graded ring 
(Proposition~\ref{prop: one-dimensional and Cohen-Macaulay}). We then show that $I(X_G)$ is a binomial ideal
(Proposition~\ref{prop: general result to get binomial ideal} and Corollary~\ref{cor: the ideal is binomial})
and we characterize binomials in $I(X_G)$ in terms of associated subgraphs of 
$G$ (Proposition~\ref{prop: even degrees}). Next we prove that, 
with respect to a given monomial order, the initial ideal of $I(X_G)$ has a generating set
independent of the field (Proposition~\ref{prop: initial ideal is independent of the field}). We then show 
that $I(X_G)$ coincides with the vanishing ideal over the graph when $K=\ZZ_3$ 
(Proposition~\ref{prop: Link to vanishing ideals})
and give a first application of these two results 
to the computation of the degree of the ideal (Proposition~\ref{prop: degree of the ideal}).
In Section~\ref{sec: regularity}, using the fact that the regularity of $I(X_G)$ 
is independent of the field, we transfer from the context of the vanishing ideal over the graph known 
properties and values of the regularity (Proposition~\ref{prop: known properties and values}). We also 
describe two useful results in our approach to the computation of the regularity 
(Propositions~\ref{prop: computing reg by reducing to Artinian quotient} and \ref{prop: special replacement}).
In Section~\ref{sec: joins and ears} we describe the connection between $\reg I(X_G)$ and the maximum vertex join number,
first establishing upper and lower bounds that hold for any graph (Theorem~\ref{thm: reg bigger than join} and 
Proposition~\ref{prop: general upper bound for reg}) and then pro\-ving equality between 
the regularity and the lower bound, $\mu(G)$, in  the bipartite case 
(Theorem~\ref{thm: reg for bipartites is max join}). We then use this theorem to deduce 
a new combinatorial result related to the number of even length ears of \emph{nested} ear decompositions 
of a bipartite graph (Corollary~\ref{cor: new combinatorial corollary}).

\section{The ideals}\label{sec: the properties}

\subsection{Assumptions and notation}
The graphs considered in this work are finite simple graphs without isolated vertices.
$K$ is any field and, as in the introduction, 
$K[V_G]$ and $K[E_G]$ will denote the polynomial rings on the vertex and 
edge sets of the graph, respectively. 
Given an edge $e=\set{i,j}$, we will also use $t_{ij}$ as an alternative notation to
$t_e$. 
Monomials in $K[V_G]$ and $K[E_G]$ will
be denoted using the multi-index notation. Namely, given 
$\alpha\in\NN^{V_G}$ and $\beta \in \NN^{E_G}$, 
the notations $\x^\alpha$ and $\t^\beta$ shall stand for the monomials
$$
\ts \x^\alpha = \prod_{i\in V_G} x_i^{\alpha(i)} \;\; \text{and}\;\;\; \t^\beta = \prod_{e\in E_G} t_e^{\beta(e)},
$$
respectively.

\subsection{The Cohen--Macaulay property}\label{subsec: CM property}
For the sake of clarity and also for later use, we begin by dealing with the case when $G$ is a single edge.
Assume, without loss of generality, that $V_G=\set{1,2}$ and $E_G=\set{\set{1,2}}$. The map 
$\theta \colon K[E_G]\to K[V_G]$ is then defined by sending 
the unique variable in the domain, $t_{12}$, to the product $x_1x_2\in K[V_G]$. 
Let $f\in K[E_G]$, which we write as:
$$
f = a_0 + a_1t_{12} +\cdots a_d t_{12}^d,
$$
for some $a_0,\dots,a_d\in K$ and $d\in \NN$. If $\theta(f)\in (x_1^2-x_2^2)$ then, setting 
$x_2=x_1$ in $\theta(f)$, we deduce that:
$$
a_0 + a_1x_1^2+a_2x_1^4 + \cdots + a_d x_1^{2d} = 0, 
$$
which implies that $a_0=\cdots =a_d = 0$, i.e. that, $f=0$. Therefore, if $G$ consists of a single edge, $I(X_G) = (0)$. 
In this situation $K[E_G]/I(X_G) \simeq K[t_{12}]$, which is clearly a one-dimensional Cohen--Macaulay graded ring. 
\smallskip

Taking now $G$ a general graph, if $\set{i,j},\set{k,\ell}\in E_G$ are two edges in $G$, one can easily see  
that $t_{ij}^2-t_{k\ell}^2 \in I(X_G)$. Indeed,
$$
\theta(t_{ij}^2-t_{k\ell}^2) = x_i^2x_j^2 - x_k^2x_\ell^2 = (x_i^2-x_k^2)x_j^2 + x_k^2(x_j^2 -x_\ell^2).
$$
Therefore $(t^2_{ij}-t^2_{k\ell} :\, \ss \set{i,j},\set{k,\ell} \ts \in E_G) \subseteq I(X_G)$.

\begin{prop}\label{prop: one-dimensional and Cohen-Macaulay}
$K[E_G]/I(X_G)$ is one-dimensional and Cohen--Macaulay. 
\end{prop}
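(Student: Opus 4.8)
The plan is to combine the injection induced by $\theta$ with the relations $t_{ij}^2-t_{k\ell}^2\in I(X_G)$ recorded just above the statement. Set $R=K[E_G]/I(X_G)$ and $B=K[V_G]/(x_i^2-x_j^2:i,j\in V_G)$. Because $I(X_G)$ is by definition the preimage under $\theta$ of the defining ideal of $B$, the map $\theta$ induces an injective homomorphism $\bar\theta\colon R\hookrightarrow B$ (its kernel is $\theta^{-1}(\cdots)/I(X_G)=0$). My strategy is to locate a single homogeneous nonzerodivisor of positive degree in $R$ and to control the corresponding quotient; this simultaneously pins down the dimension and yields the Cohen--Macaulay property.

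First I would record the structure of $B$. Fixing a vertex $v_0\in V_G$ and a monomial order with $x_i>x_{v_0}$ for $i\neq v_0$, the binomials $g_i=x_i^2-x_{v_0}^2$ (for $i\in V_G\setminus\set{v_0}$) generate the defining ideal, and they form a Gr\"obner basis: the $S$-polynomial of $g_i$ and $g_j$ equals $x_{v_0}^2(x_i^2-x_j^2)=x_{v_0}^2 g_i-x_{v_0}^2 g_j$, which reduces to $0$. Hence the standard monomials are $x_{v_0}^{a}$ times a squarefree monomial in the variables $x_i$, $i\neq v_0$, so that $B$ is a free $K[x_{v_0}]$-module of finite rank. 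In particular $x_{v_0}$, and every power $x_{v_0}^{m}$, is a nonzerodivisor in $B$.

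Next, fix an edge $e_0=\set{i_0,j_0}$ and put $w=t_{e_0}^2\in R$, a homogeneous element of degree $2$. By the relations recalled above, $w=t_e^2$ for every $e\in E_G$, and $\bar\theta(w)=x_{i_0}^2x_{j_0}^2=x_{v_0}^4$ in $B$. This $w$ does the work. On the one hand, in $R/wR$ each $t_e^2=w$ becomes $0$, so $R/wR$ is a quotient of $K[E_G]/(t_e^2:e\in E_G)$ and is a finite-dimensional $K$-vector space; by Krull's principal ideal theorem $\dim R\leq \dim R/wR+1=1$. On the other hand $\bar\theta(w^m)=x_{v_0}^{4m}\neq 0$ for all $m$, so $w$ is not nilpotent, $R$ is infinite-dimensional over $K$, and therefore $\dim R=1$.

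It remains to see that $w$ is a nonzerodivisor, which gives $\operatorname{depth} R\geq 1=\dim R$ and hence the Cohen--Macaulay property. If $wa=0$ in $R$, then $x_{v_0}^4\,\bar\theta(a)=\bar\theta(wa)=0$ in $B$; since $x_{v_0}^4$ is a nonzerodivisor in $B$ this forces $\bar\theta(a)=0$, and injectivity of $\bar\theta$ gives $a=0$. The only step requiring real care is the verification that $x_{v_0}$ is a nonzerodivisor in $B$, for which the explicit free-module (equivalently, Gr\"obner basis) description above is the essential input; once that is granted, the dimension count and the nonzerodivisor property of $w$ are purely formal.
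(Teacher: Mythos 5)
Your proof is correct, but it takes a genuinely different route from the paper's in both halves, so let me compare. For the dimension, the paper argues via height: since $t_{k\ell}^2-t_{ij}^2\in I(X_G)$ for all edges, the affine zero set of $(I(X_G),t_{ij})$ is the origin, and a cited result from \emph{Monomial Algebras} gives $\operatorname{ht} I(X_G)=|E_G|-1$; you instead get the upper bound from Krull's principal ideal theorem applied to the Artinian quotient $R/wR$ and the lower bound from non-nilpotence of $w$ through the injection $\bar\theta$ --- a self-contained alternative (only note that the principal ideal theorem step should be read in the localization at the irrelevant maximal ideal $\mathfrak{m}$, which is harmless because $\dim R=\dim R_{\mathfrak{m}}$ for a standard graded $K$-algebra and $w\in\mathfrak{m}$ is homogeneous of positive degree). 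For the Cohen--Macaulay half, the paper likewise reduces regularity of an edge variable to regularity of a single vertex variable on $B=K[V_G]/(x_i^2-x_j^2)$, but establishes the latter by an explicit division-with-remainder computation; your free $K[x_{v_0}]$-module description of $B$ coming from the Gr\"obner basis $\set{x_i^2-x_{v_0}^2}$ (whose $S$-polynomials in fact reduce to zero automatically, since the leading terms $x_i^2$, $x_j^2$ are coprime) is cleaner, handles the single-edge case the paper treats separately, and incidentally exhibits $B$ as free of rank $2^{|V_G|-1}$. The one thing your argument does not deliver, and the paper's does, is the stronger byproduct that \emph{every} monomial $\t^\delta$ is regular on $K[E_G]/I(X_G)$, not just $w=t_e^2$: this stronger fact is what gets reused later (in the remark following Corollary~\ref{cor: the ideal is binomial}, in the exact sequence \eqref{eq: L478} feeding Proposition~\ref{prop: computing reg by reducing to Artinian quotient}, and in the proof of Theorem~\ref{thm: reg bigger than join}). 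Fortunately your setup upgrades in two lines: multiplication by any variable $x_i$ maps the standard monomial basis of $B$ injectively into itself (if $x_i$ divides the squarefree part $m$ of a standard monomial $x_{v_0}^a m$, the product is $x_{v_0}^{a+2}m/x_i$; otherwise it is $x_{v_0}^a x_i m$, and no collisions occur), so every monomial of $B$ is a nonzerodivisor, and hence every $\t^\delta$ is regular on $R$ via $\bar\theta$; with that addendum your proof fully substitutes for the paper's.
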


\begin{proof}
We may assume that $|E_G|>1$.  
Then, in view of the above, the zero set of $(I(X_G),t_{ij})$ in affine space is the singleton $\set{(0,\dots,0)}$, for any $\set{i,j}\in E_G$. 
By \cite[Proposition~8.3.22]{monalg}, we conclude that
$$
\operatorname{ht} I(X_G) = |E_G|-1.
$$
Hence $K[E_G]/I(X_G)$ is a one-dimensional graded ring. 
To show that $K[E_G]/I(X_G)$ is Cohen--Macaulay we will show that it contains a regular element. 
Consider an element of the form $\t^\delta+I(X_G)$, with $\delta\in \NN^{E_G}\setminus  0$. Let us show that this element
is regular. It suffices to consider the case $\t^\delta = t_{ij}$ for some $\set{i,j}\in E_G$.
Without loss of generality, let this edge be $\set{1,2}$.
By Definition~\ref{def: the ideals}, showing that $t_{12}$ is a regular element  
of $K[E_G]/I(X_G)$ can be achieved by showing that $x_1x_2$ is a regular element
of $K[V_G]/(x_i^2-x_j^2 : i,j\in V_G)$. To this end, by symmetry, it is enough to prove that 
$x_1$ is a regular element of $K[V_G]/(x_i^2-x_j^2 : i,j\in V_G)$. Assume that $g\in K[V_G]$ is such that 
\begin{equation}\label{eq: L302}
x_1g \in (x_i^2-x_j^2 : i,j\in V_G).
\end{equation}
Let $k\in V_G$ be a vertex different from $1$. Then 
$x_i^2-x_k^2$, when $i$ varies in $V_G\setminus \set{k}$, yields a Gr\"obner basis for the ideal  
$(x_i^2-x_j^2 : i,j\in V_G)$, with respect to a monomial order where $x_k$ is the least variable. 
Since we want to show that $g$ belongs to the ideal $(x_i^2-x_j^2 : i,j\in V_G)$ we may assume 
that no term of $g$ is divisible by $x_i^2$, for any $i\in V_G\setminus \set{k}$, and aim to show that 
$g=0$. Assume that $g\not = 0$. Then, from \eqref{eq: L302}, we deduce 
that at least one term of $g$ must be divisible by $x_1$. If $c_\delta \x^\delta$, where $\delta\in \NN^{V_G}$ and 
$c_\delta \in K$, is a term of $g$ divisible by $x_1$ (and not by $x_1^2$) 
then the division of $x_1(c_\delta \x^\delta)$ by $x_1^2-x_k^2$ yields 
$$
\ts x_1(c_\delta \x^\delta) = c_\delta \frac{\x^\delta}{x_1} (x_1^2-x_k^2) + c_\delta\frac{\x^\delta}{x_1} x_k^2.
$$
If $\x^\delta$, where $\delta$ varies in some set $\Delta \subseteq \NN^{V_G}$, are the supporting monomials for 
terms of $g$ divisible by $x_1$ (and not by $x_1^2$) and $\x^\gamma$, where $\gamma$ varies in $\Gamma \subseteq \NN^{V_G}$, are those 
supporting terms of $g$ that are not divisible by $x_1$, then it is clear that 
$$
\ts \{\frac{\x^\delta}{x_1}x_k^2 : \delta \in \Delta \} \cup \set{ x_1\x^\gamma : \gamma \in \Gamma}
$$
remains a linearly independent set of monomials. This implies that the remainder 
of $x_1g$ by the division by the Gr\"obner basis of the ideal $(x_i^2-x_j^2 : i,j\in V_G)$ is not zero, contradicting
\eqref{eq: L302}. Hence, we must have $g=0$.
\end{proof}

\subsection{The binomial property} 
To prove that $I(X_G)$ is a binomial ideal, 
we shall use the next proposition, the proof of 
which follows closely the proof of \cite[Theo\-rem~2.1]{ReSiVi11}.

\begin{prop}\label{prop: general result to get binomial ideal}
Let $\theta\colon  K[y_1,\dots,y_s] \to K[x_1,\dots,x_n]$ be a ring homomorphism with 
$\theta(y_i)$ a monomial, for all $i=1,\dots,s$. Let $I\subseteq K[x_1,\dots,x_n]$ 
be an ideal ge\-ne\-ra\-ted by a finite number of homogeneous binomials. 
Then, $\theta^{-1}(I)$ is the intersection with $K[y_1,\dots,y_s]$ 
of the ideal of $K[x_1,\dots,x_n,z,y_1,\dots,y_s]$ generated by 
\begin{equation}\label{eq: L205}
\set{y_i-\theta(y_i)z : i=1,\dots,s}\cup I.
\end{equation}
Moreover, $\theta^{-1}(I)$ is an ideal generated by a finite number of homogeneous binomials.
\end{prop}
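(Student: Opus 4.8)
The plan is to establish the displayed elimination formula first and then read the binomial statement off from it. Write $R=K[x_1,\dots,x_n]$, $S=K[y_1,\dots,y_s]$, $R[z]=K[x_1,\dots,x_n,z]$ and $B=K[x_1,\dots,x_n,z,y_1,\dots,y_s]$, set $m_i=\theta(y_i)$, and let $\mathcal{J}\subseteq B$ be the ideal generated by the set in \eqref{eq: L205}. The starting observation is that the relations $y_i-m_iz$ solve each $y_i$ in terms of the $x$'s and $z$: the $K$-algebra map $B\to R[z]$ fixing the $x_j$ and $z$ and sending $y_i\mapsto m_iz$ is surjective with kernel exactly $(y_i-m_iz:i)$, so $B/(y_i-m_iz:i)\cong R[z]$. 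Passing to the further quotient by $I$ yields $B/\mathcal{J}\cong R[z]/IR[z]$, and under the composite $S\hookrightarrow B\twoheadrightarrow B/\mathcal{J}$ the variable $y_i$ maps to the class of $m_iz$. Hence $\mathcal{J}\cap S=\ker\psi$, where $\psi\colon S\to R[z]/IR[z]$ is the map $y_i\mapsto m_iz$. The first assertion therefore reduces to the identity $\ker\psi=\theta^{-1}(I)$.

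For the inclusion $\ker\psi\subseteq\theta^{-1}(I)$ I would specialise $z=1$: the substitution $R[z]\to R$, $z\mapsto1$, maps $IR[z]$ into $I$ and sends $m_iz$ to $m_i$, so it descends to $R[z]/IR[z]\to R/I$ and carries $\psi$ to $\theta$ modulo $I$; hence $\psi(f)=0$ forces $\theta(f)\in I$. The reverse inclusion is the crux and is where homogeneity enters. Lift $\psi$ to $\widetilde{\psi}\colon S\to R[z]$, $y_i\mapsto m_iz$, and grade $R$, $R[z]$ and $S$ by total degree. Because the monomials $m_i=\theta(y_i)$ all have one and the same degree (namely $2$ in the application, where $\theta(t_{ij})=x_ix_j$), the map $\theta$ is graded, and therefore $\theta^{-1}(I)$ is a homogeneous ideal of $S$; it thus suffices to treat $f\in\theta^{-1}(I)$ homogeneous of some degree $e$. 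For such an $f$ every monomial $\y^{a}$ of $f$ satisfies $|a|=e$, so $\widetilde{\psi}(\y^{a})=\theta(\y^{a})\,z^{e}$ and hence $\widetilde{\psi}(f)=z^{e}\theta(f)$. Since $\theta(f)\in I$, this lies in $IR[z]$, giving $\psi(f)=0$, that is, $f\in\ker\psi$.

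For the final assertion I would exhibit $\mathcal{J}$ as a homogeneous binomial ideal and invoke the behaviour of such ideals under elimination. Re-weight the variables by $\deg x_j=\deg z=1$ and $\deg y_i=\deg(m_i)+1$; then each generator $y_i-m_iz$ is weight-homogeneous, and together with the given homogeneous binomial generators of $I$ this presents $\mathcal{J}$ as an ideal generated by homogeneous binomials for this $\ZZ$-grading. A reduced Gr\"obner basis of $\mathcal{J}$ for an elimination order placing $x_1,\dots,x_n,z$ above $y_1,\dots,y_s$ then consists of homogeneous binomials — Buchberger's algorithm turns binomials into binomials, since $S$-polynomials and reductions of binomials are binomials, and the reduced basis of a graded ideal is homogeneous — and by the elimination theorem its members lying in $S$ generate $\mathcal{J}\cap S=\theta^{-1}(I)$. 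Each such binomial $\y^{a}-\y^{b}$ is weight-homogeneous, so $\sum_i a_i(\deg m_i+1)=\sum_i b_i(\deg m_i+1)$, which with the common degree forces $|a|=|b|$; hence the generators are homogeneous in the standard grading of $S$, and they are finite in number because $S$ is Noetherian.

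The step I expect to be the main obstacle is the inclusion $\theta^{-1}(I)\subseteq\ker\psi$, and more precisely the reduction to homogeneous $f$: it is exactly here that the single homogenising variable $z$ must faithfully record total degree, which is guaranteed only because all the monomials $\theta(y_i)$ share one degree (were their degrees to differ, $\theta^{-1}(I)$ would no longer be homogeneous and the identity $\ker\psi=\theta^{-1}(I)$ could break down). The remaining delicate point is the preservation of the binomial structure under elimination; rather than argue this by hand I would appeal to the Gr\"obner-basis theory of binomial ideals, observing that the homogeneity required for the ``homogeneous'' in the conclusion comes for free from the weighting introduced above.
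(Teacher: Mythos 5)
Your proof is correct, and while it follows the paper's overall elimination strategy (adjoin the homogenizing variable $z$, generate with $\set{y_i-\theta(y_i)z}$ and $I$, eliminate $x_1,\dots,x_n,z$, and use that Buchberger's algorithm preserves binomials), two steps are executed genuinely differently. The paper proves both inclusions by explicit membership equations: for $\theta^{-1}(I)\subseteq J\cap K[y_1,\dots,y_s]$ it substitutes $y_i=(y_i-\theta(y_i)z)+\theta(y_i)z$ to obtain $f=z^d\theta(f)+\sum h_i\,(y_i-\theta(y_i)z)$, and for the reverse inclusion it first reduces to binomials via the Gr\"obner argument and then substitutes $y_i\mapsto\theta(y_i)$, $z\mapsto1$ in an explicit expression. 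You instead package both directions through the isomorphism $K[x_1,\dots,x_n,z,y_1,\dots,y_s]/J\cong K[x_1,\dots,x_n,z]/IK[x_1,\dots,x_n,z]$ and two specializations; in particular your $z\mapsto1$ argument establishes $J\cap K[y_1,\dots,y_s]\subseteq\theta^{-1}(I)$ for arbitrary elements at once, so the Gr\"obner machinery is needed only for the ``moreover'' clause --- a cleaner division of labor than the paper's, whose detour through binomial generators is not actually needed for that inclusion (though the paper reuses it for the final assertion). Second, for homogeneity of the binomial generators the paper sets all $x_j=1$ and then $y_i=z$ in its membership equation, obtaining $z^{|\delta|}-z^{|\gamma|}=0$; you instead introduce the weighting $\deg y_i=\deg\theta(y_i)+1$, note that Buchberger's algorithm preserves weight-homogeneity, and deduce standard homogeneity from the common degree of the $\theta(y_i)$. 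Your weighted argument genuinely requires the $\theta(y_i)$ to share one degree, whereas the paper's substitution trick does not; but this costs nothing because, as you correctly flag (and the paper leaves implicit when it asserts ``$\theta$ is a graded ring homomorphism''), the proposition itself needs that hypothesis: for $\theta(y_1)=x_1$, $\theta(y_2)=x_1x_2$ and $I=(x_1^2-x_2^2)$, the element $y_1^4-y_2^2$ lies in $\theta^{-1}(I)$ but not in $J$ (the specialization $y_i\mapsto\theta(y_i)z$, $x_2\mapsto x_1$ kills every generator of $J$ yet sends $y_1^4-y_2^2$ to $x_1^4z^2(z^2-1)\neq0$), so with unequal degrees both the elimination identity and the homogeneity of $\theta^{-1}(I)$ fail. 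Making that hypothesis explicit, as you do, is a point where your write-up is more precise than the paper's.
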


\begin{proof}
Let us denote by $J\subseteq K[x_1,\dots,x_n,z,y_1,\dots,y_s]$ the ideal generated by  \eqref{eq: L205}.
Since $\theta$ is a graded ring homomorphism, $\theta^{-1}(I)$ is a homogeneous ideal. Thus, to prove 
the inclusion $\theta^{-1}(I)\subseteq J\cap K[y_1,\dots,y_s]$, it suffices to restrict to homogeneous polynomials. 
Assume that $f\in K[y_1,\dots,y_s]$, homogeneous of degree $d$, is such that
$$
\ts\theta(f)=f(\theta(y_1),\dots,\theta(y_s))\in I.
$$ 
For each $i$ consider the substitution of $y_i$ in $f$ by $(y_i-\theta(y_i)z) + \theta(y_i)z$. Using the binomial theorem, we deduce that 
$$
\ts f = z^d f(\theta(y_1),\dots,\theta(y_n)) + \sum_{i=1}^s h_i \cdot (y_i-\theta(y_i)z),
$$
for some $h_i\in K[x_1,\dots,x_n,z,y_1,\dots,y_s]$. Since, by assumption, 
$\theta(f) \in I$ we conclude that $f \in J\cap K[y_1,\dots,y_s]$. 
\smallskip 

\noindent 
To prove the opposite inclusion, $J\cap K[y_1,\dots,y_s]\subseteq \theta^{-1}(I)$, we will 
show first that the ideal $J\cap K[y_1,\dots,y_s]$ is generated by binomials.
As, by assumption, $\theta(y_i)$ are monomials, every element of the set $\set{y_i-\theta(y_i)z : i=1,\dots,s}$ is a binomial.
This is also true for the given generating set of $I$. We deduce that $J$ is generated by a finite number of binomials. 
As $S(f,g)$, when $f$ and $g$ are binomials, if non-zero, is also a binomial, and the remainder of the division of a binomial 
by another binomial, if non-zero, is also a binomial, Buchberger's algorithm, 
for producing a Gr\"obner basis from the set of generators of $J$, will also yield a set 
of binomials. Using the elimination order (variables
$y_1,\dots,y_s$ as last variables) we deduce that $J\cap K[y_1,\dots,y_s]$ has a Gr\"obner basis consisting of binomials and thus, 
in particular, it is generated by binomials.
Accordingly, assume that $\y^\delta - \y^\gamma$, for some $\delta,\gamma \in \NN^s$ belongs to $J\cap K[y_1,\dots,y_s]$. Then, 
there exist $h_i,g_j \in K[x_1,\dots,x_n,z,y_1,\dots,y_s]$ and $\ell_j \in I$ such that 
\begin{equation}\label{eq: L234}
\ts \y^\delta - \y^\gamma = \sum_{i=1}^s h_i\cdot (y_i-\theta(y_i)z) + \sum_{j=1}^k g_j \ell_j,
\end{equation}
for some $k\geq 0$. 
Substituting above each $y_i$ by $\theta(y_i)\in K[x_1,\dots,x_n]$ and the variable $z$ by $1$ we deduce that 
$$
\ts \theta(\y^\delta - \y^\gamma) = \sum_{j=1}^k g'_j\ell_j,
$$
for some $g'_j\in K[x_1,\dots,x_n]$. This proves the inclusion $J\cap K[y_1,\dots,y_s]\subseteq \theta^{-1}(I)$.
\smallskip

\noindent
We have shown that $\theta^{-1}(I)=J\cap K[y_1,\dots,y_s]$. Hence, in particular, $\theta^{-1}(I)$ is generated
by a finite number of polynomials of the form $\y^\delta - \y^\gamma$. To see that each of these must be homogeneous, 
we go back to \eqref{eq: L234} and substitute all the variables $x_1,\dots,x_n$ by $1$. Then, since $I$ is generated by binomials
we get $\ell_j (1,\dots,1)= 0$. Moreover, $\theta(y_i)(1,\dots,1)=1$, as, by
assumption, $\theta(y_i)$ are monomials. We deduce:
$$
\ts \y^\delta - \y^\gamma = \sum_{i=1}^s h'_i\cdot (y_i-z),
$$
for some $h'_i\in K[z,y_1,\dots,y_s]$. Substituting in the above $y_i$ by $z$ we get: 
$$
z^{\delta_1+\cdots+\delta_s}-z^{\gamma_1+\cdots+\gamma_s} = 0,
$$
which implies that $\delta_1+\cdots+\delta_s=\gamma_1+\cdots+\gamma_s$, i.e., that $\y^\delta-\y^\gamma$ is homogeneous.
\end{proof}

\begin{cor} \label{cor: the ideal is binomial}
$I(X_G)$ is generated by homogeneous binomials.  
\end{cor}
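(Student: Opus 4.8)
The plan is to invoke Proposition~\ref{prop: general result to get binomial ideal} directly, with the identification $K[y_1,\dots,y_s] = K[E_G]$ and $K[x_1,\dots,x_n] = K[V_G]$, taking the ideal $I = (x_i^2 - x_j^2 : i,j \in V_G)$ in the role of the binomial ideal appearing in that proposition. The whole argument then reduces to checking that the two hypotheses of the proposition are met in this setting.

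First I would verify that $\theta$ sends variables to monomials: by Definition~\ref{def: the ideals}, each variable $t_e$, for $e = \set{i,j} \in E_G$, satisfies $\theta(t_e) = x_i x_j$, which is a monomial in $K[V_G]$. Hence the requirement that $\theta(y_i)$ be a monomial for every generating variable is satisfied. Second, I would observe that $I = (x_i^2 - x_j^2 : i,j \in V_G)$ is generated by the elements $x_i^2 - x_j^2$ as $i,j$ range over $V_G$; each of these is a binomial, and each is homogeneous of degree two. Since $V_G$ is finite, there are only finitely many such generators, so $I$ is generated by a finite number of homogeneous binomials, exactly as the proposition demands.

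With both hypotheses confirmed, Proposition~\ref{prop: general result to get binomial ideal} applies and yields that $\theta^{-1}(I) = I(X_G)$ is generated by a finite number of homogeneous binomials, which is precisely the statement of the corollary. The substantive content — producing binomial generators for the elimination ideal $J \cap K[E_G]$ by a Gr\"obner basis computation that preserves the binomial form, and then forcing those binomial generators to be homogeneous by the specialization $x_i \mapsto 1$ — has already been handled inside the proof of the proposition. For this reason there is no genuine obstacle at this stage: the corollary is a pure specialization of the general result, and the only thing requiring care is matching $I(X_G)$ to the template, namely confirming that the target ideal is binomial and homogeneous and that $\theta$ carries each variable to a monomial.
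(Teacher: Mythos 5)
Your proposal is correct and matches the paper's own proof, which likewise obtains the corollary by applying Proposition~\ref{prop: general result to get binomial ideal} with $K[y_1,\dots,y_s]=K[E_G]$, $K[x_1,\dots,x_n]=K[V_G]$, $\theta(t_{ij})=x_ix_j$ and $I=(x_i^2-x_j^2 : i,j\in V_G)$. Your explicit verification of the two hypotheses (that $\theta$ sends each variable to a monomial and that $I$ is generated by finitely many homogeneous binomials) is exactly the routine checking the paper leaves implicit.
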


\begin{proof} 
Apply Proposition~\ref{prop: general result to get binomial ideal}  with 
$K[y_1,\dots,y_s]=K[E_G]$, $K[x_1,\dots,x_n]=K[V_G]$, 
$\theta(t_{ij})=x_ix_j$ and $I=(x_i^2-x_j^2 : i,j\in V_G)$.
\end{proof}

\begin{rmk}
Since, as was shown in the proof of Proposition~\ref{prop: one-dimensional and Cohen-Macaulay}, 
any monomial is regular on $K[E_G]/I(X_G)$, from a generating set of $I(X_G)$ consisting of binomials we  
obtain one in which all binomials $\t^\alpha-\t^\beta$ satisfy $\gcd(\t^\alpha,\t^\beta)=1$.
\end{rmk}

\subsection{Binomials and subgraphs} In Section~\ref{sec: joins and ears}, we will
use the following characterization of homogeneous binomials in $I(X_G)$.

\begin{prop}\label{prop: even degrees}
Let $\t^\alpha - \t^\beta$ a homogeneous binomial with $\gcd(\t^\alpha,\t^\beta)=1$ and  
let $H$ be the subgraph of $G$ the edge set of which is in bijection with the 
variables that occur in either $\t^\alpha$ or $\t^\beta$ raised to an odd power. 
Then $\t^\alpha - \t^\beta \in I(X_G)$ if and only if the degree of $v$ in $H$ is even, 
for all $v \in V_H$. In particular, if $\set{i,k} \neq \set{j,\ell}$ are two edges then, 
$t_{ik}-t_{j\ell} \notin I(X_G)$.
\end{prop}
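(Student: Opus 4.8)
The plan is to translate membership in $I(X_G)$ into a statement about equality of monomials in the quotient ring $A := K[V_G]/(x_i^2-x_j^2 : i,j\in V_G)$, and then to read that equality off from the parities of exponent vectors. By Definition~\ref{def: the ideals}, $\t^\alpha-\t^\beta\in I(X_G)$ if and only if $\theta(\t^\alpha)=\theta(\t^\beta)$ in $A$. Writing $e=\set{i,j}$, one has $\theta(t_e)=x_ix_j$, so $\theta(\t^\alpha)=\x^{c}$, where $c(v)=\sum_{e\ni v}\alpha(e)$ is the $\alpha$-weighted degree of the vertex $v$; denote by $c'$ the corresponding weighted degree for $\beta$. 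First I would record that $\deg_x\theta(\t^\alpha)=2\sum_e\alpha(e)=2\sum_e\beta(e)=\deg_x\theta(\t^\beta)$, the middle equality being exactly the homogeneity hypothesis; thus $\x^c$ and $\x^{c'}$ have the same total degree.

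The key step is to describe when two monomials of equal total degree coincide in $A$. Using the Gr\"obner basis $\set{x_i^2-x_k^2 : i\in V_G\setminus\set{k}}$ already exhibited in the proof of Proposition~\ref{prop: one-dimensional and Cohen-Macaulay} (with $x_k$ the least variable), the normal form of $\x^\delta$ is $x_k^{m}\prod_{i\neq k,\,\delta(i)\text{ odd}}x_i$, where the exponent $m$ is forced by the total degree. Hence two monomials of the same total degree have the same normal form if and only if their exponent vectors are congruent modulo $2$ coordinatewise. Combining with the previous paragraph, $\theta(\t^\alpha)=\theta(\t^\beta)$ in $A$ if and only if $c(v)\equiv c'(v)\pmod 2$ for every $v\in V_G$.

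It then remains to identify this parity condition with the statement about $H$. Since $\gcd(\t^\alpha,\t^\beta)=1$, for each edge $e$ at most one of $\alpha(e),\beta(e)$ is nonzero, so $\alpha(e)+\beta(e)$ is odd precisely when $t_e$ occurs to an odd power in $\t^\alpha$ or in $\t^\beta$, i.e. precisely when $e\in E_H$. Therefore, working modulo $2$,
$$
c(v)-c'(v)\equiv \sum_{e\ni v}\bigl(\alpha(e)+\beta(e)\bigr)\equiv \bigl|\set{e\in E_H : v\in e}\bigr| = \deg_H v \pmod 2,
$$
so the condition ``$c(v)\equiv c'(v)\pmod 2$ for all $v$'' is equivalent to ``$\deg_H v$ is even for all $v\in V_H$'' (the congruence being automatic at vertices outside $V_H$), which is the asserted equivalence. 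For the final claim, take $\t^\alpha=t_{ik}$ and $\t^\beta=t_{j\ell}$: these are distinct variables, so $\gcd=1$, the binomial is homogeneous of degree one, and $H$ consists of the two edges $\set{i,k},\set{j,\ell}$; whether or not these edges share a vertex, $H$ has a vertex of degree one, so by the equivalence $t_{ik}-t_{j\ell}\notin I(X_G)$.

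I expect the only delicate point to be the reduction to parities: one must invoke homogeneity to guarantee that $\x^c$ and $\x^{c'}$ share the same total $x$-degree, since the coordinatewise parity condition characterizes equality in $A$ only among monomials of a fixed degree, not in general.
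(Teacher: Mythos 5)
Your proof is correct, and its overall skeleton matches the paper's: both reduce the statement to a parity characterization of when a homogeneous binomial $\x^\delta-\x^\gamma$ lies in $(x_i^2-x_j^2 : i,j\in V_G)$, and the final bookkeeping relating $\delta(v)+\gamma(v) \bmod 2$ to $\deg_H v$ via the $\gcd$ hypothesis is the same. Where you genuinely diverge is in the proof of that central claim. The paper argues both directions by hand: sufficiency by induction on the degree, peeling off one degree at a time via the identity $\x^\delta-\x^\gamma=(x_i^2-x_j^2)\x^{\delta'}+x_j(x_j\x^{\delta'}-\x^{\gamma'})$, and necessity by specializing $x_j\mapsto 1$ for all $j\neq i$ and analyzing the resulting univariate identity $x_i^{\delta(i)}-x_i^{\gamma(i)}=g(x_i)(x_i^2-1)$. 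You instead compute normal forms with respect to the Gr\"obner basis $\set{x_i^2-x_k^2 : i\in V_G\setminus\set{k}}$ --- a fact the paper itself asserts, without verification, in the proof of Proposition~\ref{prop: one-dimensional and Cohen-Macaulay}, and which is a one-line Buchberger check since $S(x_i^2-x_k^2,\,x_j^2-x_k^2)=x_k^2(x_i^2-x_j^2)$ reduces to zero. Your route is shorter, handles both directions at once, and uniformly absorbs the degenerate cases (the binomial mapping to zero under $\theta$, and low degree) which the paper treats separately --- its claim is stated only for binomials of degree $>1$. You were also right to isolate the one delicate point: coordinatewise parity characterizes equality in $K[V_G]/(x_i^2-x_j^2 : i,j\in V_G)$ only among monomials of a fixed total degree, and it is precisely the homogeneity of $\t^\alpha-\t^\beta$ that supplies this. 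What the paper's longer argument buys in exchange is self-containedness: its induction exhibits the binomial as an explicit combination of the generators, with no appeal to Gr\"obner machinery.
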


\begin{proof}
Let $\x^\delta -\x^\gamma\in K[V_G]$, with $\delta,\gamma \in \NN^{V_G}$ be a homogeneous binomial of degree $>1$. 
We claim that $\x^\delta-\x^\gamma \in (x_i^2-x_j^2 : i,j\in V_G)$ if, and only if, $\delta(i)+\gamma(i)$ is even, 
for every $i\in V_G$. 
\smallskip

\noindent 
To prove this claim, assume first that $\delta(i)+\gamma(i)$ is even, 
for every $i\in V_G$ and let us show that $\x^\delta +\x^\gamma\in (x_i^2-x_j^2 : i,j\in V_G)$. 
We will argue by induction on the degree of $\x^\delta - \x^\gamma$. 
Since $\x^\delta - \x^\gamma\not = 0$ there exists $i\in V_G$ such that $\delta(i)\not = \gamma(i)$.
If the degree of $\x^\delta - \x^\gamma$ is two and $\delta(i)+\gamma(i)$ is even, 
one of $\delta(i)$ or $\gamma(i)$ must be equal to $2$ and 
the other equal to $0$. Assume, without loss of generality that $\delta(i)=2$ and $\gamma(i)=0$.
Then, there exists $j\not = i$ such that $\delta(j)=0$ and $\gamma(j)=2$. In other words,
$\x^\delta-\x^\gamma = x_i^2-x_j^2$. Assume now that the degree of $\x^\delta-\x^\gamma$ is $>2$ and, 
without loss of generality, that $\delta(i)\geq \gamma(i)+2$. Let $j\in V_G$ be such that $\gamma(j)>0$
and let $\delta',\gamma' \in \NN^{V_G}$ be such that 
$\x^{\delta'} = \x^\delta / x_i^2$ and $\x^{\gamma'} = \x^\gamma/x_j$. Then 
\begin{equation}\label{eq: L466}
\x^\delta-\x^\gamma = (x_i^2-x_j^2)\x^{\delta'} +x_j(x_j\x^{\delta'}-\x^{\gamma'}).
\end{equation}
Write $x_j\x^{\delta'}-\x^{\gamma'} = \x^\mu-\x^\nu$ for some $\mu,\nu \in \NN^{V_G}$.
Then $\x^\mu+\x^\nu$ has degree one less than $\x^\delta+\x^\gamma$. Additionally
$$
\ts \mu(i)+\nu(i)=\delta(i)-2+\gamma(i),\quad  \mu(j)+\nu(j)= 1+\delta(j) +\gamma(j)-1
$$
are even, and so are $\mu(k)+\nu(k)=\delta(k)+\gamma(k)$, for every $k\in V_G\setminus \set{i,j}$.
By induction hypothesis, $\x^\mu-\x^\nu \in (x_i^2-x_j^2 : i,j \in V_G)$ and then, by \eqref{eq: L466},
$$
\x^\delta - \x^\gamma \in (x_i^2-x_j^2 : i,j \in V_G).
$$

\noindent
Conversely, let us assume that $\x^\delta -\x^\gamma \in (x_i^2-x_j^2 : i,j \in V_G)$.
We want to show that $\delta(i)+\gamma(i)$ is even, for every $i\in V_G$. Write 
$$
\ts \x^\delta - \x^\gamma = \sum_{ij} f_{ij} (x_i^2-x_j^2),
$$
for some $f_{ij}\in K[V_G]$ and, fixing $i\in V_G$, substitute in the above all
$x_j$ by $1$, for all $j\not = i$. Then, there exists $g\in K[x_i]$ such that 
$$
x_i^{\delta(i)} - x_i^{\gamma(i)} = g(x_i)(x_i^2-1).
$$
Without loss of generality we may assume that $\delta(i)>\gamma(i)$. Then we deduce that 
$$
g(x_i)=x_i^{\delta(i)-2}+x_i^{\delta(i)-4}+\cdots +x_i^{\delta(i)-2m},
$$
for some $m>0$, which, in particular, implies that $\gamma(i)=\delta(i)-2m$ and, therefore,
that $\delta(i)+\gamma(i)$ is even. We have proved our claim.
\smallskip

Let $\t^\alpha - \t^\beta$ be a homogeneous binomial with $\gcd(\t^\alpha,\t^\beta)=1$. 
Write 
$$
\theta(\t^\alpha-\t^\beta) = \x^\delta-\x^\gamma,
$$
for some $\delta,\gamma \in \NN^{V_G}$. 
Then, since $\gcd(\t^\alpha,\t^\beta)=1$, 
we deduce that $\delta(i)+\gamma(i)$
differs from $\deg_H (i)$ by an even number, for every $i\in V_G$. 
If $\x^\delta-\x^\gamma$ is zero then $\t^\alpha -\t^\beta \in I(X_G)$ and $\delta(i)=\gamma(i)$, for every $i\in V_G$, which
implies that $\delta(i)+\gamma(i)$ is even. Assume that $\x^\delta-\x^\gamma$ is non-zero, 
and, thus, a homogeneous binomial of degree $\geq 2$. Then,  $\t^\alpha -\t^\beta \in I(X_G)$ if
and only if, by definition, \mbox{$\x^\delta-\x^\gamma \in (x_i^2-x_j^2 : i,j\in V_G)$} which,
by our claim and previous observation, is equivalent to $\deg_H(i)$ being even, for 
every $i\in V_G$.
\end{proof}

\begin{examp}
Consider the graph, $G$, in Figure~\ref{fig: the non-bipartite graph}.
Then, by Proposition~\ref{prop: even degrees},
$$
\renewcommand{\arraystretch}{1.5}
\begin{array}{l}
t_{13}t_{45}t_{56}-t_{12}t_{23}t_{46}, \quad t_{23}t_{45}t_{56}-t_{12}t_{13}t_{46},\quad t_{12}t_{45}t_{56}-t_{23}t_{13}t_{46},\\ 
t_{23}t_{13}t_{56}-t_{12}t_{45}t_{46}, \quad t_{12}t_{13}t_{56}-t_{23}t_{45}t_{46}, \quad t_{12}t_{23}t_{56}-t_{13}t_{45}t_{46},\\
t_{23}t_{13}t_{45}-t_{12}t_{56}t_{46}, \quad t_{12}t_{13}t_{45}-t_{23}t_{56}t_{46}, \quad t_{12}t_{23}t_{45}-t_{13}t_{56}t_{46},\\ t_{12}t_{23}t_{13}-t_{45}t_{56}t_{46}.
\end{array}
$$ 
are binomials belonging to $I(X_G)$, as they are all associated to the subgraph given by the two triangles
of $G$. 
\begin{figure}[ht]
\begin{center}
\begin{tikzpicture}[line cap=round,line join=round, scale=1.5]
\draw [fill=black] (-.5,0) circle (1pt) node[anchor = south west] {$\ss 3$};
\draw [fill=black] (.5,0) circle (1pt) node[anchor = south east] {$\ss 4$};
\draw [fill=black] (-1.25,.5) circle (1pt) node[anchor = south east] {$\ss 1$};
\draw [fill=black] (-1.25,-.5) circle (1pt) node[anchor = north east] {$\ss 2$};
\draw [fill=black] (1.25,.5) circle (1pt) node[anchor = south west] {$\ss 5$};
\draw [fill=black] (1.25,-.5) circle (1pt) node[anchor = north west] {$\ss 6$};
\draw (-1.25,.5)-- (-1.25,-.5);
\draw (1.25,.5) -- (1.25,-.5);
\draw (-1.25,.5) --(-.5,0);
\draw (-1.25,-.5) --(-.5,0);
\draw (1.25,.5) --(.5,0);
\draw (1.25,-.5) --(.5,0);
\draw (-.5,0) -- (.5,0);
\end{tikzpicture}
\end{center}
\caption{A non-bipartite graph.}
\label{fig: the non-bipartite graph} 
\end{figure}
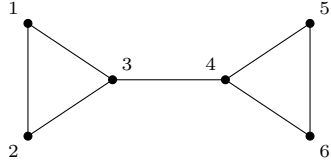
Using \cite[Macaulay2]{M2} one can show that these binomials together with 
$\set{t^2_{ij}-t_{46}^2 :\; \ss \set{i,j}\ts \in E_G\setminus \set{\ss\set{4,6}\ts}}$
give a reduced Gr\"obner basis for $I(X_G)$, with respect to the graded reverse lexicographic order 
induced by $t_{12}\succ t_{23} \succ t_{13}\succ t_{34} \succ t_{45} \succ t_{56} \succ t_{46}$.

\end{examp}

\begin{cor}\label{cor: relation of the ideal with the ideal with respect to a subgraph}
Let $H\subseteq G$ be a subgraph without isolated vertices. Consider $I(X_H)$ 
as a subset of $K[E_G]$ under the inclusion $K[E_H]\subseteq K[E_G]$. Then 
$$I(X_H) = I(X_G) \cap K[E_H].$$
\end{cor}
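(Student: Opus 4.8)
The plan is to reduce the statement to the analogous identity for the vertex ideals and then prove that identity by a Gr\"obner basis argument. Write $\theta=\theta_G\colon K[E_G]\to K[V_G]$ for the map of Definition~\ref{def: the ideals}, and let $\theta_H\colon K[E_H]\to K[V_H]$ be the corresponding map for $H$. Set $L_G=(x_i^2-x_j^2 : i,j\in V_G)$ and $L_H=(x_i^2-x_j^2 : i,j\in V_H)$. The first observation is purely formal: since an edge $e=\set{i,j}\in E_H$ has both endpoints in $V_H$, the restriction of $\theta$ to $K[E_H]$ coincides with $\theta_H$ and takes values in $K[V_H]\subseteq K[V_G]$; moreover the generators of $L_H$ are a subset of those of $L_G$, so $L_H\subseteq L_G$ under the inclusion $K[V_H]\subseteq K[V_G]$.

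The key lemma I would isolate is
\[
L_G\cap K[V_H]=L_H.
\]
Granting it, both inclusions follow at once. If $f\in I(X_H)$, then $\theta(f)=\theta_H(f)\in L_H\subseteq L_G$, so $f\in\theta^{-1}(L_G)=I(X_G)$, and trivially $f\in K[E_H]$; this gives $I(X_H)\subseteq I(X_G)\cap K[E_H]$. Conversely, if $f\in I(X_G)\cap K[E_H]$, then $\theta(f)\in K[V_H]$ (only variables indexed by $V_H$ occur in it) and $\theta(f)\in L_G$, hence $\theta(f)\in L_G\cap K[V_H]=L_H$ by the lemma, so $f\in\theta_H^{-1}(L_H)=I(X_H)$.

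To prove the lemma I would reuse the Gr\"obner basis already used in the proof of Proposition~\ref{prop: one-dimensional and Cohen-Macaulay}: fixing any $k\in V_H$, the set $\set{x_i^2-x_k^2 : i\in V_G\setminus\set{k}}$ is a Gr\"obner basis of $L_G$ for a monomial order in which $x_k$ is the least variable, with leading terms $x_i^2$ for $i\neq k$. The inclusion $L_H\subseteq L_G\cap K[V_H]$ is clear, so I would take $g\in L_G\cap K[V_H]$ and reduce it modulo this basis. The decisive point is that reduction preserves $K[V_H]$: every monomial occurring in a polynomial of $K[V_H]$ involves only variables $x_i$ with $i\in V_H$, so any leading term $x_i^2$ that divides it has $i\in V_H$, and the corresponding reduction step subtracts a multiple of $x_i^2-x_k^2$, which lies in $L_H$ because $i,k\in V_H$, landing again in $K[V_H]$. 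Thus the whole reduction of $g$ stays inside $K[V_H]$ and uses only generators of $L_H$; since $g\in L_G$ its normal form is $0$, and unwinding the reduction exhibits $g$ as a $K[V_H]$-combination of generators of $L_H$. Hence $g\in L_H$.

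The routine parts are the functoriality of $\theta$ in the first paragraph and the forward inclusion, which alternatively follows directly from Proposition~\ref{prop: even degrees}, since the subgraph associated to a binomial supported on $E_H$ is the same whether computed inside $H$ or inside $G$. The main obstacle will be the lemma $L_G\cap K[V_H]=L_H$: the content is the elimination of the extra variables indexed by $V_G\setminus V_H$, and the argument works precisely because $k$ may be chosen inside $V_H$, so that normal-form reduction of a polynomial supported on $V_H$ never introduces new variables and only invokes the relations $x_i^2-x_j^2$ with $i,j\in V_H$.
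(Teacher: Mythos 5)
Your proof is correct, but it takes a genuinely different route from the paper's. The paper's proof is a two-line application of its binomial machinery: since $I(X_H)$ and $I(X_G)$ are generated by homogeneous binomials (and, implicitly, so is the elimination ideal $I(X_G)\cap K[E_H]$, by the binomial Gr\"obner basis argument from Proposition~\ref{prop: general result to get binomial ideal}), it suffices to compare binomials $\t^\alpha-\t^\beta\in K[E_H]$ with $\gcd(\t^\alpha,\t^\beta)=1$, and Proposition~\ref{prop: even degrees} decides membership in either ideal by the same intrinsic condition --- evenness of degrees in the associated subgraph, which does not depend on whether one views it inside $H$ or inside $G$. You instead push everything to the vertex side, isolating the elimination lemma $L_G\cap K[V_H]=L_H$ for $L_G=(x_i^2-x_j^2 : i,j\in V_G)$ and proving it by normal-form reduction against the Gr\"obner basis $\set{x_i^2-x_k^2 : i\in V_G\setminus\set{k}}$ with $k\in V_H$ --- the same basis the paper exhibits in the proof of Proposition~\ref{prop: one-dimensional and Cohen-Macaulay}. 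Your key observation, that each reduction step of a polynomial supported on $V_H$ subtracts a $K[V_H]$-multiple of a relation $x_i^2-x_k^2$ with $i,k\in V_H$ and so never leaves $K[V_H]$, is sound (termination with zero remainder is exactly the Gr\"obner property, and since the remainder is zero no term is ever shunted aside), and the choice of $k$ inside $V_H$ is indeed the decisive point. What each approach buys: yours is self-contained and avoids having to know that the elimination ideal $I(X_G)\cap K[E_H]$ is binomial, and the lemma is more general (it is an elimination statement about the ideals $L_G$ themselves, independent of any graph); the paper's is shorter given Proposition~\ref{prop: even degrees} and makes the combinatorial content visible, namely that membership of a binomial is a property of its associated subgraph alone --- a fact you yourself note suffices for the forward inclusion.
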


\begin{proof}
Since both $I(X_H)$ and $I(X_G)$ are generated by homogeneous binomials, we may restrict to checking that 
$I(X_H)$ and $I(X_G)\cap K[E_H]$ contain the same homogeneous binomials $\t^\alpha-\t^\beta \in K[E_H]$, with
$\gcd(\t^\alpha,\t^\beta)=1$. This follows from Proposition~\ref{prop: even degrees}.
\end{proof}

\subsection{Independence of the field} The construction of Proposition~\ref{prop: general result to get binomial ideal} 
can be used to show that, for a fixed monomial order, there exists a set of generators of the initial 
ideal of $I(X_G)$ which is independent of the field.

\begin{prop}\label{prop: initial ideal is independent of the field}
For a fixed monomial order, there exists a set of generators of the initial ideal of $I(X_G)$ which is independent of the field.
Moreover, if the characteristic of the field is not $2$, then there exists a Gr\"obner basis of 
$I(X_G)$ independent of the field.
\end{prop}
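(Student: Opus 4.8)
The plan is to leverage Proposition~\ref{prop: general result to get binomial ideal}, which realizes $I(X_G)$ as an elimination ideal. Recall that construction: with $K[E_G]$ playing the role of $K[y_1,\dots,y_s]$, $K[V_G]$ that of $K[x_1,\dots,x_n]$, $\theta(t_{ij})=x_ix_j$, and $I=(x_i^2-x_j^2 : i,j\in V_G)$, we obtain $I(X_G) = J\cap K[E_G]$, where $J$ is the ideal of $K[V_G,z,E_G]$ generated by the binomials $\set{t_{ij}-x_ix_jz : \set{i,j}\in E_G}$ together with the generators $x_i^2-x_j^2$ of $I$. The key observation is that \emph{these generators all have coefficients $\pm 1$, independently of $K$}. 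First I would fix an elimination monomial order on $K[V_G,z,E_G]$ in which the variables $t_e$ are eliminated last, and run Buchberger's algorithm on this explicit binomial generating set.

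The central point is that Buchberger's algorithm, applied to binomials with unit coefficients, is insensitive to the field \emph{as long as no $S$-polynomial reduction produces a coefficient that vanishes or becomes a unit differently in different characteristics}. Concretely, an $S$-polynomial of two binomials $\t^\alpha-\t^\beta$ and $\t^{\alpha'}-\t^{\beta'}$ (after clearing the unit leading coefficients) is again a difference of two monomials, hence either zero or a binomial with coefficients $\pm 1$; and the division of a binomial by binomials with unit leading coefficients yields, at each step, a binomial or zero, again with unit coefficients. The only place where the field can intervene is in deciding whether an intermediate difference of monomials is zero or not — but for binomials the monomials are either literally equal (giving $0$) or distinct (giving a genuine $\pm$-coefficient binomial), and this comparison of monomials is purely combinatorial. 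Therefore the set of leading monomials produced by the algorithm, i.e. the generators of the initial ideal of $J$, does not depend on $K$, and neither does the set of those generators lying in $K[E_G]$. By the elimination property this latter set generates the initial ideal of $I(X_G)=J\cap K[E_G]$, proving the first assertion.

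For the second assertion, when $\operatorname{char} K\neq 2$, I would upgrade the conclusion from the initial ideal to an actual Gröbner basis of $I(X_G)$. Once the field-independence of the combinatorial skeleton (the exponent vectors of all binomials appearing) is established as above, the reduced Gröbner basis of $I(X_G)$ consists of binomials $\t^\alpha-c\,\t^\beta$ whose \emph{exponent data} $\alpha,\beta$ are field-independent; it remains only to pin down the coefficient $c$. Here I would invoke Proposition~\ref{prop: even degrees}: a reduced binomial $\t^\alpha-\t^\beta$ with $\gcd(\t^\alpha,\t^\beta)=1$ lies in $I(X_G)$ precisely when its associated subgraph $H$ has all even degrees, a condition forcing $\deg\t^\alpha=\deg\t^\beta$ and $\theta(\t^\alpha)=\theta(\t^\beta)$ as \emph{monomials} in $K[V_G]$ — so the natural coefficient is exactly $1$, with no field dependence, provided the tracking of signs through Buchberger reduction stays in $\set{\pm 1}$. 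The role of the hypothesis $\operatorname{char} K\neq 2$ is to guarantee that $2$ is a unit, so that whenever a reduction step would otherwise halve an exponent or combine two like terms, the resulting coefficient is a genuine unit and the binomial form (and hence its field-independence) is preserved.

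The main obstacle I anticipate is the coefficient bookkeeping in the second part: showing that no reduction in Buchberger's algorithm ever forces a coefficient to be divisible by a prime, which would make the reduced basis genuinely characteristic-dependent. The clean way to control this is to note that every generator of $J$ is a difference of two monomials with coefficient $1$, that this property is closed under forming $S$-polynomials and taking remainders (the only coefficients ever appearing are $\pm 1$ as long as we never add two binomials sharing a monomial with the \emph{same} sign, which the reduced/coprime hypothesis $\gcd(\t^\alpha,\t^\beta)=1$ rules out), and that the even-degree criterion of Proposition~\ref{prop: even degrees} certifies membership combinatorially rather than through any characteristic-specific cancellation. The characteristic-$2$ exclusion is exactly what prevents the degenerate case where $x_i^2-x_j^2=(x_i-x_j)^2$ and the binomial structure could collapse.
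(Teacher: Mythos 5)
Your proposal follows essentially the same route as the paper's own proof: realize $I(X_G)$ as the elimination ideal $J\cap K[E_G]$ via Proposition~\ref{prop: general result to get binomial ideal}, run Buchberger's algorithm on the binomial generators $\set{t_e-\theta(t_e)z}\cup\set{x_i^2-x_j^2}$ under an order with the $t_e$ least, and observe that $S$-polynomials and remainders of $\pm 1$-coefficient binomials are again such binomials (or zero, decided by a purely combinatorial comparison of monomials), so the run --- hence the set of leading monomials, hence the generators of the initial ideal --- is independent of $K$, with characteristic $2$ affecting only signs. Two small corrections to your commentary: the claim that $\gcd(\t^\alpha,\t^\beta)=1$ is what rules out a same-signed collision of monomials is a non sequitur --- what rules it out is that $S$-polynomials and reduction steps structurally produce opposite-sign pairs, which cancel in every characteristic --- and the role of $\operatorname{char}K\neq 2$ is not to keep $2$ a unit (a coefficient $2$ never arises, and no reduction step ``halves an exponent''), but merely that $-1\neq 1$, so the signs in the output are well defined and the Gr\"obner basis itself, not just its leading monomials, is field-independent; in characteristic $2$ the basis may differ from the uniform one only by multiplication of terms by $-1$, which is exactly how the paper phrases it, and which also makes your detour through Proposition~\ref{prop: even degrees} to pin down the coefficient $c=1$ unnecessary.
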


\begin{proof}
Consider a monomial order in $K[E_G]$.
Let us apply Proposition~\ref{prop: general result to get binomial ideal}  with 
$$K[y_1,\dots,y_s]=K[E_G],\quad K[x_1,\dots,x_n]=K[V_G],$$ 
$\theta(t_{ij})=x_ix_j$ and $I=(x_i^2-x_j^2 : i,j\in V_G)$.
Fix a monomial order in 
$$
K[\set{x_i : i\in V_G}\cup \set{z}\cup \set{t_e : e\in E_G}]
$$
extending the monomial order of $K[E_G]$ in which the variables in $\set{t_e:e\in E_G}$, are 
the least variables. Then, by the argument in the proof of Proposition~\ref{prop: general result to get binomial ideal},
a Gr\"obner basis of $I(X_G)$ can be obtained by first applying  Buchberger's algorithm to the set
$$
\ts \set{t_e - \theta(t_e)z : e\in E_G} \cup \set{x_i^2-x_j^2 : i,j \in V_G},
$$
and then taking only the elements of the output belonging $K[E_G]$. 
Assume that the field has characteristic $\not = 2$. Then, 
since the \mbox{$S$-polynomial} of the difference of two monomials, if it is not zero, 
is again a difference of two monomials, since 
the remainder in the standard expression of a binomial with respect to a list of binomials, if it is not zero,
is again a binomial, the result of Buchberger's algorithm will be independent of the field.
If the characteristic of $K$ is $2$, in which case one replaces ``difference of monomials'' by ``sum of monomials''
in the previous argument, the Gr\"obner basis obtained yields a set of leading terms with coefficient $1$. 
The ones obtained in the previous case may differ, possibly, by the multiplication with $-1$.
\end{proof}

\subsection{Relation to the vanishing ideal over graphs}\label{subsec: where you introduce the vanishing ideal}
Let us now recall from \cite{ReSiVi11} the notion of vanishing ideal over a graph. 
Assume that $K$ is a finite field, let 
$\PP^{|V_G|-1}$ and $\PP^{|E_G|-1}$ 
be the projective spaces with coordinates indexed 
by $V_G$ and $E_G$, respectively and let 
$$
\vartheta\colon \PP^{|V_G|-1}\dasharrow \PP^{|E_G|-1}
$$ 
be the rational map defined by $t_e \mapsto x_ix_j$, for every edge $e=\set{i,j}$.
The \emph{projective toric subset parameterized by $G$} is the subset of $\PP^{|E_G|-1}$ given by 
the image by $\vartheta$ of the projective torus, 
$$
\ts \TT^{|V_G|-1} = \set{(x_i)_{i\in V_G} \in \PP^{|V_G|-1} : \prod_{i\in V_G} x_i \not = 0} \subseteq \PP^{|V_G|-1}.
$$ 
The vanishing ideal over $G$, for the finite field $K$, is, by definition, the vanishing ideal 
of this set. These ideals were defined and studied in \cite{ReSiVi11} and, since then, appeared in the literature in 
\cite{GoRe08, GoReSa13, MaNeVPVi, Ne, NeVP14,NeVPVi15, NeVPVi14, SaVPVi11, VPVi13}. 
We know that the vanishing ideal over a graph is a binomial ideal and that the quotient 
of $K[E_G]$ by it is a Cohen--Macaulay, reduced, one-dimensional graded ring. However the invariants
of the vanishing ideal and its minimal generating sets are not independent of the
field. For example, if $G$ is a cycle of length $4$ on the vertex set $V_G=\set{1,2,3,4}$, with 
$K$ a field with $q$ elements, then, by \cite[Theorem~5.9]{NeVPVi15}, we know that the vanishing ideal over $G$ is
generated by:
$$
\renewcommand{\arraystretch}{1.75}
\begin{array}{l}
t_{12}t_{34}-t_{23}t_{14}, \\
t^{q-1}_{12}-t_{14}^{q-1},\; t^{q-1}_{23}-t_{14}^{q-1},\; t^{q-1}_{34}-t_{14}^{q-1},\\
t_{12}^{q-2}t_{23}-t_{34}t_{14}^{q-2},\; t_{12}^{q-3}t_{23}^2-t_{34}^2t_{14}^{q-3},\dots,\; t_{12}t_{23}^{q-2}-t_{34}^{q-2}t_{14}\\
t_{12}^{q-2}t_{14}-t_{34}t_{23}^{q-2},\; t_{12}^{q-3}t_{14}^2-t_{34}^2t_{23}^{q-3},\dots,\; t_{12}t_{14}^{q-2}-t_{34}^{q-2}t_{23}.\\
\end{array}
$$
By \cite[Theorems~3.2~and~6.2]{NeVPVi15}, 
this ideal has degree $(q-1)^{2}$ and regularity $q-1$.
\smallskip 

\noindent
The link between the ideals $I(X_G)$ and the vanishing ideals occurs when $K=\ZZ_3$. As we show below,
in this situation, both ideals coincide.

\begin{prop}\label{prop: Link to vanishing ideals}
Assume that $K=\ZZ_3$. 
Then $I(X_G)$ is the vanishing ideal of the projective toric subset parameterized by $G$
\end{prop}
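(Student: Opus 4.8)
\emph{Setup and reduction.} Let $\I(X_G)$ denote the vanishing ideal of the projective toric subset $X_G=\vartheta(\TT^{|V_G|-1})$; the goal is to prove $\I(X_G)=I(X_G)$ when $K=\ZZ_3$. The first step I would record is the translation $\I(X_G)=\theta^{-1}\bigl(\I(\TT^{|V_G|-1})\bigr)$, where $\I(\TT^{|V_G|-1})\subseteq K[V_G]$ is the vanishing ideal of the torus. Indeed, for a homogeneous $F\in K[E_G]$ and a representative $a\in(\ZZ_3^*)^{V_G}$ of a point $P\in\TT^{|V_G|-1}$, the coordinates of $\vartheta(P)$ are $(a_ia_j)_{\set{i,j}\in E_G}$, so $F(\vartheta(P))=\theta(F)(a)$; hence $F$ vanishes on $X_G$ if and only if $\theta(F)\in\I(\TT^{|V_G|-1})$. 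Since both $\I(X_G)$ and $I(X_G)=\theta^{-1}(x_i^2-x_j^2:i,j\in V_G)$ are homogeneous binomial ideals, it suffices to show that they contain exactly the same binomials $\t^\alpha-\t^\beta$ with $\gcd(\t^\alpha,\t^\beta)=1$ and $|\alpha|=|\beta|$ (the legitimacy of this reduction is addressed at the end).

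\emph{The key evaluation over $\ZZ_3$.} Fix such a reduced homogeneous binomial and write $\theta(\t^\alpha-\t^\beta)=\x^\delta-\x^\gamma$, so that $\delta(i)=\sum_{e\ni i}\alpha(e)$ and $\gamma(i)=\sum_{e\ni i}\beta(e)$. The arithmetic of $\ZZ_3$ is what does the real work: every $a_i\in\ZZ_3^*$ satisfies $a_i\in\set{1,-1}$ and $a_i^2=1$, so $a_i^{\delta(i)}$ depends only on the parity of $\delta(i)$ and $\x^\delta(a)=\prod_{\delta(i)\,\text{odd}}a_i$. Consequently
$$
(\t^\alpha-\t^\beta)(\vartheta(P))=\prod_{\delta(i)\,\text{odd}}a_i-\prod_{\gamma(i)\,\text{odd}}a_i,
$$
and this vanishes for every sign vector $a\in\set{1,-1}^{V_G}$ precisely when the index sets $\set{i:\delta(i)\ \text{odd}}$ and $\set{i:\gamma(i)\ \text{odd}}$ coincide, i.e. when $\delta(i)\equiv\gamma(i)\pmod 2$ for all $i\in V_G$. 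Thus $\t^\alpha-\t^\beta\in\I(X_G)$ if and only if $\delta(i)\equiv\gamma(i)\pmod2$ for every $i$.

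\emph{Matching with Proposition~\ref{prop: even degrees}.} It remains to identify this parity condition with membership in $I(X_G)$. In the proof of Proposition~\ref{prop: even degrees} it is shown that $\delta(i)+\gamma(i)\equiv\deg_H(i)\pmod2$, where $H$ is the subgraph attached to $\t^\alpha-\t^\beta$. Therefore $\delta(i)\equiv\gamma(i)\pmod2$ for all $i$ is equivalent to $\deg_H(i)$ being even for all $i$, which by Proposition~\ref{prop: even degrees} is exactly the condition $\t^\alpha-\t^\beta\in I(X_G)$. Combining this with the evaluation above yields $\t^\alpha-\t^\beta\in\I(X_G)\iff\t^\alpha-\t^\beta\in I(X_G)$ for every reduced homogeneous binomial, and hence $\I(X_G)=I(X_G)$.

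\emph{Main obstacle.} The only genuinely delicate point is the reduction to comparing reduced binomials rather than a true difficulty. I would justify it as follows: both ideals are homogeneous and binomial, and neither contains a monomial (no point of $X_G$ lies on a coordinate hyperplane, and any monomial is regular modulo $I(X_G)$ by Proposition~\ref{prop: one-dimensional and Cohen-Macaulay}), so each is generated by the homogeneous binomials it contains; moreover, since every $t_e$ is a nonzerodivisor modulo each ideal, a common monomial factor of a binomial in the ideal may be cancelled without leaving the ideal, reducing every generator to the case $\gcd(\t^\alpha,\t^\beta)=1$. Once this bookkeeping is in place, the substance of the argument is the elementary observation that over $\ZZ_3$ the value of a monomial on the torus is determined by the parities of its exponents.
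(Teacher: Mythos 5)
Your proof is correct, but after the first step it takes a genuinely different route from the paper's. Both arguments begin with the same translation: a homogeneous $f\in K[E_G]$ vanishes on the toric subset if and only if $\theta(f)$ vanishes on $\TT^{|V_G|-1}$. At that point the paper simply quotes the fact that over $\ZZ_3$ the vanishing ideal of the projective torus is $(x_i^2-x_j^2 : i,j\in V_G)$, so the conclusion is immediate from Definition~\ref{def: the ideals} --- a two-line proof that never mentions binomials and works for arbitrary homogeneous $f$ at once. You never invoke that description of $\I(\TT^{|V_G|-1})$; instead you reduce both ideals to their reduced homogeneous binomials (which requires the binomiality of the vanishing ideal, an external input from \cite{ReSiVi11}, together with the nonzerodivisor cancellation you correctly supply) and then establish the key equivalence by hand: the evaluation over sign vectors $a\in\set{1,-1}^{V_G}$ shows that a reduced binomial vanishes on the torus precisely when $\delta(i)\equiv\gamma(i)\pmod 2$ for every $i$, and this parity condition is matched against the characterization proved inside Proposition~\ref{prop: even degrees} (via the observation there that $\delta(i)+\gamma(i)$ and $\deg_H(i)$ have the same parity). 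In effect you re-prove the paper's quoted torus fact in the binomial case, making the role of the $\ZZ_3$ arithmetic (units square to $1$, and $1\neq -1$ in odd characteristic) completely explicit. Your supporting bookkeeping is sound: neither ideal contains a monomial, so every binomial in either ideal is homogeneous, and every variable is regular modulo each ideal (Proposition~\ref{prop: one-dimensional and Cohen-Macaulay} on one side, the fact that $X_G$ avoids the coordinate hyperplanes on the other), which legitimizes the $\gcd$-cancellation. The trade-off: the paper's route is shorter and independent of the binomial structure of the vanishing ideal, while yours is self-contained on the torus side at the cost of that extra reduction machinery.
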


\begin{proof}
Since $I(X_G)$ is a homogeneous ideal, it will suffice to check 
that $I(X_G)$ and the vanishing ideal of the projective toric subset 
parameterized by $G$ have the same homogeneous elements. Let $f\in K[E_G]$ be homogeneous. 
Then $f$ vanishes on the projective toric subset parameterized by $G$ if and only if $\theta(f)$ vanishes on 
$\TT^{|V_G|-1}$. Now, if $K=\ZZ_3$, the vanishing ideal of $\TT^{|V_G|-1}$ is 
$$
(x_i^2-x_j^2 : i,j\in V_G).
$$ 
We deduce that $\theta(f)$ vanishes on $\TT^{|V_G|-1}$ if and only if $\theta(f) \in (x_i^2-x_j^2 : i,j\in V_G)$ which, by
definition, is equivalent to $f\in I(X_G)$.
\end{proof}

\begin{rmk}\label{rmk: the set in the notation}
One can define a subset, $X_G\subseteq \PP^{|E_G|-1}$ over any field $K$, by the image of 
a map $\vartheta$, defined exactly as above, of a subset $D\subseteq \PP^{|V_G|-1}$ consisting 
of all points in $\PP^{|V_G|-1}$ with homogeneous coordinates in 
$\set{-1,1}\subseteq K$. One can then show that, if the characteristic of $K$ is not equal to $2$, 
$I(X_G)$ is the vanishing ideal of $X_G$. 
Note, however, that this is definitely not the case if the characteristic of $K$ is equal to $2$, 
as then $D$ consists of a single point. 
\end{rmk}

\subsection{Degree of the ideal} 
The next result relates the degree of $I(X_G)$ with $|V_G|$ and $b_0(G)$, 
the number of connected components of $G$. 
Recall that we are assuming that $G$ is a simple graph without isolated vertices.

\begin{prop}\label{prop: degree of the ideal}
The degree of $I(X_G)$ is $2^{|V_G|-b_0(G)}$, if $G$ is non-bipartite, and it is $2^{|V_G|-b_0(G)-1}$ if $G$ is bipartite.
\end{prop}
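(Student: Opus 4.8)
The plan is to reduce the computation of the degree to a count of points over $\ZZ_3$, exploiting the field-independence established earlier. First I would observe that the degree of $K[E_G]/I(X_G)$ is determined by its Hilbert function, and that the quotient ring and the quotient by the initial ideal of $I(X_G)$ share the same Hilbert function. Since, by Proposition~\ref{prop: initial ideal is independent of the field}, the initial ideal admits a generating set independent of $K$, its Hilbert function, and hence the degree, is the same for every field. It therefore suffices to compute the degree in the single case $K=\ZZ_3$.

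Over $K=\ZZ_3$, Proposition~\ref{prop: Link to vanishing ideals} identifies $I(X_G)$ with the vanishing ideal of the finite set $X_G=\vartheta(\TT^{|V_G|-1})\subseteq \PP^{|E_G|-1}$. For the vanishing ideal of a finite set of projective points the Hilbert function is eventually constant, equal to the number of points, so the degree of $I(X_G)$ equals $|X_G|$. The computation thus comes down to counting the image of the parameterizing map.

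To count $|X_G|$, note that over $\ZZ_3$ the nonzero field elements are exactly $\set{1,-1}$, so a point of $\TT^{|V_G|-1}$ is a sign vector $s\in\set{1,-1}^{V_G}$ taken up to a global sign, and its image under $\vartheta$ is the edge sign vector $(s_is_j)_{\set{i,j}\in E_G}$, again up to a global sign. I would phrase the fiber analysis group-theoretically: writing $W=\set{1,-1}^{V_G}$ and $\psi\colon W\to \set{1,-1}^{E_G}$ for the homomorphism $\psi(r)_{\set{i,j}}=r_ir_j$, two sign vectors $s,s'$ have the same image in $\PP^{|E_G|-1}$ precisely when $\psi(ss')=\pm(1,\dots,1)$. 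Hence every fiber of the parameterization has the size $|H|$ of the subgroup $H=\psi^{-1}(\set{\pm(1,\dots,1)})$, and $|X_G|=2^{|V_G|}/|H|$.

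The decisive step, and where bipartiteness enters, is the evaluation of $|H|$. The vectors $r$ with $\psi(r)=(1,\dots,1)$ are exactly those constant on each connected component, contributing $2^{b_0(G)}$; the vectors with $\psi(r)=(-1,\dots,-1)$ are exactly the proper sign $2$-colorings, that is, the $r$ with $r_i\neq r_j$ along every edge, which exist if and only if every component is bipartite and then number $2^{b_0(G)}$ (two per component). Therefore $|H|=2^{b_0(G)}$ in the non-bipartite case and $|H|=2^{b_0(G)+1}$ in the bipartite case, yielding $|X_G|=2^{|V_G|-b_0(G)}$ and $|X_G|=2^{|V_G|-b_0(G)-1}$ respectively. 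The main obstacle I anticipate is making the projective fiber count rigorous, in particular correctly handling the global-sign ambiguity in $\PP^{|E_G|-1}$ and pinning the count of proper $2$-colorings to the bipartite dichotomy; once $|H|$ is computed the result follows immediately.
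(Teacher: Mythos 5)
Your proposal is correct, and its first half coincides exactly with the paper's proof: both use Proposition~\ref{prop: initial ideal is independent of the field} to reduce the degree computation to the single field $K=\ZZ_3$, and then Proposition~\ref{prop: Link to vanishing ideals} to identify $I(X_G)$ with the vanishing ideal of the finite set $X_G$, so that the degree equals $|X_G|$. Where you diverge is the second half: the paper stops at this point and cites \cite[Theorem~3.2]{NeVPVi15} for the cardinality of the projective toric subset, whereas you compute $|X_G|$ from scratch by a group-theoretic fiber count. Your count is sound: the map $\psi\colon\set{\pm 1}^{V_G}\to\set{\pm 1}^{E_G}$, $\psi(r)_{\set{i,j}}=r_ir_j$, is a homomorphism of elementary abelian $2$-groups, two sign vectors give the same point of $\PP^{|E_G|-1}$ precisely when their product lies in $H=\psi^{-1}(\set{\pm(1,\dots,1)})$, so all fibers have size $|H|$ and $|X_G|=2^{|V_G|}/|H|$; the solutions of $\psi(r)=(1,\dots,1)$ are the $2^{b_0(G)}$ vectors constant on components, and the solutions of $\psi(r)=(-1,\dots,-1)$ are the proper sign $2$-colorings, which exist iff $G$ is bipartite and then also number $2^{b_0(G)}$. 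Two small points worth making explicit: the two solution sets are disjoint because every component contains an edge (here the standing assumption of no isolated vertices is used --- an isolated-vertex component would make both conditions vacuous), and ``$G$ non-bipartite'' must be read as ``some component is non-bipartite,'' which kills all solutions of $\psi(r)=(-1,\dots,-1)$ globally. What your route buys is self-containedness --- you are in effect reproving the $q=3$ case of the cited theorem, and the argument is elementary and transparent about where bipartiteness enters; what the paper's route buys is brevity and consistency with its general strategy of transferring facts about vanishing ideals over finite fields wholesale via the $\ZZ_3$ specialization.
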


\begin{proof}
Fix a monomial order in $K[E_G]$. 
By Proposition~\ref{prop: initial ideal is independent of the field} there exists a generating 
set for the initial ideal of $I(X_G)$ independent of the field. Hence  
the multiplicity degree of $K[E_G]/I(X_G)$ is independent of the field. 
Consider then $K=\ZZ_3$. By Proposition~\ref{prop: Link to vanishing ideals}, 
$I(X_G)$ is the vanishing ideal over the graph $G$. The
result now follows from \cite[Theorem~3.2]{NeVPVi15}.
\end{proof}

\section{Regularity}\label{sec: regularity}
The Castelnuovo--Mumford regularity of a graded finitely generated module, $M$, over a polynomial ring
is, by definition, 
$$
\ts \reg M=\max_{i,j} \set{j-i : \beta_{ij} \not = 0},
$$
where $\beta_{ij}$ are the graded Betti numbers of $M$. The index of regularity of $M$ is, by definition, 
$$
\operatorname{ir} M =\min \set{k\in \mathbb{N} : \varphi_M(d)=P_M(d),\; \forall\, d\geq k},
$$
where $\varphi_M$ and $P_M$ are the Hilbert function and the Hilbert Polynomial of $M$, respectively.
By \cite[Corollary~4.8]{E05}, if $M$ is Cohen--Macaulay,  
$$
\reg M = \operatorname{ir} M +\dim M -1,
$$
and hence the Castelnuovo--Mumford regularity and the index of regularity of the module $K[E_G]/I(X_G)$ coincide.
Since 
$$
\reg I(X_G) = \reg K[E_G]/I(X_G) + 1,
$$
and since the Hilbert function of $K[E_G]/I(X_G)$ and the Hilbert function 
of the quotient of $K[E_G]$ by the initial ideal of $I(X_G)$ coincide, using 
Proposition~\ref{prop: initial ideal is independent of the field}, the proof of the following result is straightforward. 

\begin{prop}\label{prop: regularity is independent of the field}
The regularity of $I(X_G)$  is independent of the field.
\end{prop}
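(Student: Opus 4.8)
The plan is to show that $\reg I(X_G)$ does not depend on the choice of field $K$ by reducing the question to a statement about Hilbert functions, which we already know to be field-independent via the initial ideal. The starting observation is that $K[E_G]/I(X_G)$ is Cohen--Macaulay of dimension one, by Proposition~\ref{prop: one-dimensional and Cohen-Macaulay}. Consequently, the cited result \cite[Corollary~4.8]{E05} applies and gives
$$
\reg K[E_G]/I(X_G) = \operatorname{ir} K[E_G]/I(X_G) + \dim K[E_G]/I(X_G) - 1 = \operatorname{ir} K[E_G]/I(X_G),
$$
since $\dim K[E_G]/I(X_G)=1$. Because $\reg I(X_G) = \reg K[E_G]/I(X_G) + 1$, it therefore suffices to prove that the index of regularity $\operatorname{ir} K[E_G]/I(X_G)$ is independent of $K$.

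Next I would observe that, by definition, the index of regularity is determined entirely by the Hilbert function $\varphi_M$ and the Hilbert polynomial $P_M$ of $M = K[E_G]/I(X_G)$: it is the least $k$ beyond which the two agree. So the remaining task is to verify that both the Hilbert function and the Hilbert polynomial of $K[E_G]/I(X_G)$ are independent of the field. For the Hilbert function, I would invoke the standard fact that a graded ideal and its initial ideal (with respect to any fixed monomial order) have the same Hilbert function, so that the Hilbert function of $K[E_G]/I(X_G)$ equals that of $K[E_G]/\operatorname{in}(I(X_G))$. By Proposition~\ref{prop: initial ideal is independent of the field}, the initial ideal $\operatorname{in}(I(X_G))$ admits a generating set independent of $K$; hence the monomials outside $\operatorname{in}(I(X_G))$ form a basis of each graded piece whose count is the same for every field, and so $\varphi_M$ is field-independent. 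The Hilbert polynomial is then automatically field-independent as well, being the unique polynomial agreeing with $\varphi_M$ for large degree.

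Putting these pieces together, the index of regularity, being the largest degree at which the field-independent Hilbert function deviates from the field-independent Hilbert polynomial, is itself independent of $K$; hence so is $\reg K[E_G]/I(X_G)$, and therefore $\reg I(X_G)$. I do not expect any serious obstacle here, as the argument is essentially the concatenation of three standard facts (the Cohen--Macaulay reduction of regularity to index of regularity, the equality of Hilbert functions for an ideal and its initial ideal, and field-independence of the initial ideal's generators). The only point requiring a little care is the bookkeeping around dimension one, namely that the $\dim M - 1$ term vanishes so that regularity and index of regularity literally coincide, which is precisely the statement made in the paragraph preceding the proposition; this makes the passage to Hilbert functions clean rather than merely up to a constant shift.
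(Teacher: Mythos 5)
Your proposal is correct and follows essentially the same route as the paper, which likewise combines the Cohen--Macaulay reduction of regularity to the index of regularity (with the $\dim M - 1$ term vanishing in dimension one), the equality of Hilbert functions of $K[E_G]/I(X_G)$ and of the quotient by the initial ideal, and Proposition~\ref{prop: initial ideal is independent of the field}. The paper merely states this chain of standard facts before the proposition and declares the proof straightforward; your write-up spells out the same argument in full.
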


Together with Proposition~\ref{prop: Link to vanishing ideals}, this result enables 
the transfer to our setting of some results about the regularity of the vanishing ideal of a graph.

\begin{prop}\label{prop: known properties and values}
Let $G$ be a simple graph without isolated vertices.
\begin{enumerate}\setlength{\itemsep}{.25cm}
\item If $H\subseteq G$ is a spanning subgraph with the same number of connected components such that \mbox{either}  
$H$ and $G$ are both non-bipartite or both bipartite, then
$\reg I(X_G)\leq \reg I(X_H)$.
\item If $G$ is bipartite and $H_1,\dots,H_c$ are its blocks, then
$$
\ts \reg I(X_G) =  \sum_{i=1}^c \reg I(X_{H_i}). 
$$
\item If $G=\K_{a,b}$ is a complete bipartite graph, then $\reg I(X_G) = \max\set{a,b}$. 
\item If $G$ is bipartite and Hamiltonian, then $\reg I(X_G) =\frac{|V_G|}{2}$.
\item If $G$ is a forest, then $\reg I(X_G) =|V_G|-b_0(G)$.
\item If $G$ has a single cycle and this cycle is odd, then $$\reg I(X_G) = |V_G|-b_0(G)+1.$$
\end{enumerate}
\end{prop}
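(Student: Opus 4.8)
The plan is to deduce all six items from a single, uniform reduction to the \emph{vanishing ideal} over a graph. By Proposition~\ref{prop: regularity is independent of the field}, the number $\reg I(X_G)$ does not depend on the field, so I am free to compute it over $K=\ZZ_3$. For this field, Proposition~\ref{prop: Link to vanishing ideals} identifies $I(X_G)$ with the vanishing ideal of the projective toric subset parameterized by $G$. Consequently $\reg I(X_G)$ coincides with the regularity of the vanishing ideal over $G$ for the three-element field, and each of (i)--(vi) becomes the specialization to $q=|K|=3$ of a formula or bound already recorded in the literature recalled in \S\ref{subsec: where you introduce the vanishing ideal}. The entire proof is therefore organized around first invoking these two propositions and then quoting, item by item, the corresponding known statement for the vanishing ideal.

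Once the reduction is in place I would treat the items as follows. Item (i) follows from the monotonicity of the regularity of the vanishing ideal under passage to a spanning subgraph that preserves both the number of connected components and the bipartite type; item (ii) from the additivity of that regularity over the blocks of a bipartite graph. Items (iii)--(vi) follow from the explicit values of the regularity of the vanishing ideal known for complete bipartite graphs, bipartite Hamiltonian graphs, forests, and graphs with a single odd cycle, respectively, each evaluated at $q=3$. As a concrete check on this substitution, recall from \S\ref{subsec: where you introduce the vanishing ideal} that the vanishing ideal over the $4$-cycle $C_4=\K_{2,2}$ has regularity $q-1$; at $q=3$ this equals $2$, which simultaneously matches $\max\{2,2\}$ in (iii) and $|V_G|/2=2$ in (iv).

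No new computation is required: given Propositions~\ref{prop: regularity is independent of the field} and \ref{prop: Link to vanishing ideals}, the statement is essentially a dictionary translation, so the main difficulty is bookkeeping rather than analysis. One must align each hypothesis --- the bipartite versus non-bipartite condition in (i), the block decomposition in (ii), the ``single odd cycle'' condition in (vi) --- with the precise form of its source, and verify that the $q$-dependence of each cited formula collapses to the stated field-free value exactly at $q=3$. In particular, the bipartite/non-bipartite dichotomy that governs the degree in Proposition~\ref{prop: degree of the ideal} must be tracked consistently throughout, since it is precisely this distinction that separates the value $|V_G|-b_0(G)$ of (v) from the value $|V_G|-b_0(G)+1$ of (vi).
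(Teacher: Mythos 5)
Your opening reduction --- field-independence (Proposition~\ref{prop: regularity is independent of the field}) followed by the identification with the vanishing ideal at $K=\ZZ_3$ (Proposition~\ref{prop: Link to vanishing ideals}) --- is exactly the paper's first step, and for items (i)--(iii) your plan matches the paper's proof: (ii) is \cite[Theorem~7.4]{NeVPVi14} and (iii) is \cite[Corollary~5.4]{GoRe08}, each read off at $q=3$. (Even for (i) you are slightly glossing: the monotonicity you invoke is not on record as a graph-theoretic statement; the paper first uses Proposition~\ref{prop: degree of the ideal} --- this is where the hypotheses ``spanning, same number of components, same bipartite type'' enter --- to get $|X_H|=|X_G|$, and only then applies \cite[Lemma~2.13]{VPVi13}, which compares regularities of parameterized sets of equal cardinality.) The genuine gap is in your treatment of (iv)--(vi): you assert these are ``formulas already recorded in the literature,'' to be quoted at $q=3$, but no such formulas exist for bipartite Hamiltonian graphs, forests, or unicyclic graphs with an odd cycle. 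The only recorded value in this range is the \emph{even cycle} case \cite[Theorem~6.2]{NeVPVi15}, so your dictionary has no entries for half the items, and the proposal as written does not prove them.

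What the paper actually does for these three items requires new arguments, each short but not a citation. For (iv), a Hamiltonian bipartite $G$ with $a=|V_G|/2$ is sandwiched as $C\subseteq G\subseteq \K_{a,a}$ between its Hamiltonian cycle and a complete bipartite graph; applying (i) twice, together with (iii) and the even-cycle value, pinches $a\leq \reg I(X_G)\leq a$. For (v), one needs the computation from Section~\ref{subsec: CM property} that $I(X_G)=(0)$ when $G$ is a single edge (so $\reg I(X_G)=1$), and then block additivity (ii) applied to a forest, whose blocks are its edges. For (vi), the argument is a genuine computation with no vanishing-ideal antecedent: Proposition~\ref{prop: degree of the ideal} gives $|X_G|=2^{|V_G|-b_0(G)}=2^{|E_G|-1}$, which forces $X_G$ to be the entire projective torus $\TT^{|E_G|-1}$, hence $I(X_G)=(t_e^2-t_f^2 : e,f\in E_G)$ is a complete intersection of $|E_G|-1$ quadrics, and its Hilbert series yields $\reg I(X_G)=|V_G|-b_0(G)+1$. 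Note also that (iv)--(vi) are not logically independent of (i)--(iii): the paper's proofs of (iv) and (v) consume (i)--(iii) as inputs, an internal bootstrapping your item-by-item quotation scheme cannot reproduce.
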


\begin{proof}
By Proposition~\ref{prop: regularity is independent of the field}, we may consider $K=\ZZ_3$, in which case,  
by Proposition~\ref{prop: Link to vanishing ideals}, $I(X_G)$ is the vanishing ideal of $X_G$, the projective toric subset
parameterized by $G$.
\smallskip

\noindent
(i) Since $V_H=V_G$ and either $H$ and $G$ are
both non-bipartite or both bipartite, we get, 
from Proposition~\ref{prop: degree of the ideal}, $|X_H|=|X_G|$. 
Using \cite[Lemma~2.13]{VPVi13},
$$ 
\ts \reg I(X_G) = K[E_G]/I(X_G)+1\leq \reg K[E_H]/I(X_H)+1 = \reg I(X_H).
$$

\noindent
(ii) Using \cite[Theorem~7.4]{NeVPVi14}, 
$$
\ts \reg I(X_G) =  \sum_{i=1}^c \reg K[E_{H_i}]/I(X_{H_i}) + (c-1) +1 = \sum_{i=1}^c \reg I(X_{H_i}). 
$$

\noindent
(iii) Using \cite[Corollary~5.1.9]{monalg} and \cite[Corollary~5.4]{GoRe08}, 
$$
\reg I(X_G) = \reg K[E_G]/I(X_G) +1 = \max\set{a,b}-1+1 = \max\set{a,b}.
$$ 

\noindent
(iv) If $G$ is an even cycle then this follows from \cite[Theorem~6.2]{NeVPVi15}.
Consider the general case. Then $G$ is connected and its vertex set has even cardinality. Let $a$ be
equal to $\frac{|V_G|}{2}$ and let $C$ denote an (even) Hamiltonian cycle. Then $C\subseteq G\subseteq \K_{a,a}$.
By (i), (iii) and the even cycle case described before:
$$
\ts a \leq \reg I(X_G) \leq \frac{|V_G|}{2} = a. 
$$

\noindent
(v) Suppose $G$ is an edge. Then $I(X_G)=(0)$ and hence \mbox{$\reg I(X_G)=1=|V_G|-1$}. 
If $G$ is a forest, then $G$ is bipartite and each edge is a block of $G$, hence, by (ii), we get
$\reg I(X_G) = |E_G|=|V_G|-b_0(G)$. 
\smallskip

\noindent
(vi) If $G$ has a single odd cycle then, 
from Proposition~\ref{prop: degree of the ideal}, we get
$$
\ts |X_G|=2^{|V_G|-b_0(G)}=2^{|E_G|-1}.
$$ 
We deduce that $X_G$ coincides with the projective torus, $\TT^{|E_G|-1}\subseteq \PP^{|E_G|-1}$ and,
accordingly, $I(X_G) = (t_e^2-t_f^2 : e,f\in E_G)$,
which is a complete intersection of $|E_G|-1$ binomials of degree two.
The Hilbert series of the quotient  
$K[E_G]/I(X_G)$ is then equal to 
$$
\ts \frac{(1-T^2)^{|E_G|-1}}{(1-T)^{|E_G|}}\cdot
$$
By \cite[Corollary~5.1.9]{monalg},
$\reg I(X_G) =  2|E_G|-2-|E_G|+2=|V_G|-b_0(G)+1$.
\end{proof}

\begin{rmk}
If $G$ is bipartite, it is straightforward from Proposition~\ref{prop: known properties and values} that  
the regularity of $I(X_G)$ is additive on the connected components of $G$. 
This does not hold without the bipartite assumption. In fact, even if $G$ is connected, 
addi\-tivity along the blocks of $G$ does not hold without the bipartite assumption. 
A counter-example is given by the graph in Figure~\ref{fig: the non-bipartite graph}. 
The three blocks of the graph yield regularities $3$ (twice) and $1$. However,  
using \cite[Macaulay2]{M2} one checks that the regularity of $I(X_G)$ is $4$.
\end{rmk}

The next results reflect our approach to the computation of the regularity of 
$I(X_G)$ or, equivalently, the regularity of the quotient $K[E_G]/I(X_G)$. We will 
resort to an Artin\-ian reduction of $K[E_G]/I(X_G)$, by quotienting 
the polynomial ring by the ideal generated by $I(X_G)$ and an arbitrary 
monomial $\t^\delta$ of degree $d$. As we saw in the proof of Proposition~\ref{prop: one-dimensional and Cohen-Macaulay}, 
$\t^\delta$ is $K[E_G]/I(X_G)$-regular and therefore multiplication by $\t^\alpha$ yields  
the following short exact sequence:
\begin{equation}\label{eq: L478}
0 \to \frac{K[E_G]}{I(X_G)} [-d] \stackrel{\t^\delta}{\longrightarrow} \frac{K[E_G]}{I(X_G)} \to \frac{K[E_G]}{(I(X_G),\t^\alpha)} \to 0.
\end{equation} 

\begin{prop}\label{prop: computing reg by reducing to Artinian quotient}
Let $\t^\delta \in K[E_G]$ be a monomial of degree $d$. Then the quotient 
$K[E_G]/(I(X_G),\t^\delta)$ is zero in degree $k$ if and only if $k\geq \reg K[E_G]/I(X_G) + d$.
\end{prop}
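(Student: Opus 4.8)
The plan is to pass to Hilbert functions. I would write $R=K[E_G]/I(X_G)$ and $A=K[E_G]/(I(X_G),\t^\delta)$, and denote by $\varphi_R$ and $\varphi_A$ their Hilbert functions. Since $\t^\delta$ is a regular element of $R$ by Proposition~\ref{prop: one-dimensional and Cohen-Macaulay}, the sequence \eqref{eq: L478} is exact, and taking Hilbert functions of its three terms yields
$$
\varphi_A(k)=\varphi_R(k)-\varphi_R(k-d).
$$
Because $R$ is one-dimensional and Cohen--Macaulay, its regularity and index of regularity coincide, $\reg R=\operatorname{ir}R$, so the assertion ``$A$ is zero in degree $k$ if and only if $k\ge \reg R+d$'' is equivalent to the statement that $\varphi_R(k)=\varphi_R(k-d)$ holds exactly when $k\ge \operatorname{ir}R+d$. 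This is what I would establish.

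The key step is a precise description of where $\varphi_R$ strictly increases. I would fix a variable $t_e$, which is a regular element of degree $1$ on $R$ (again by Proposition~\ref{prop: one-dimensional and Cohen-Macaulay}), and consider the standard graded Artinian algebra $B=R/t_eR$, whose Hilbert function is the first difference $\varphi_B(j)=\varphi_R(j)-\varphi_R(j-1)\ge 0$. Since $B$ is generated in degree one, $B_{j+1}=B_1\cdot B_j$, so once a graded component of $B$ vanishes every higher one does; hence the support of $\varphi_B$ is an initial segment $\set{0,1,\dots,s}$. Equivalently, $\varphi_R$ is strictly increasing on $\set{0,\dots,s}$ and constant afterwards, and comparing this with the definition of the index of regularity (the least degree from which $\varphi_R$ agrees with its eventual constant value, the multiplicity of $R$) identifies $s=\operatorname{ir}R$. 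Thus $\varphi_B(j)>0$ precisely for $0\le j\le \operatorname{ir}R$.

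To finish, I would telescope. Writing
$$
\varphi_A(k)=\varphi_R(k)-\varphi_R(k-d)=\sum_{j=k-d+1}^{k}\varphi_B(j),
$$
a sum of $d$ non-negative terms, we see that $\varphi_A(k)=0$ if and only if $\varphi_B(j)=0$ for every $j$ in the window $k-d+1\le j\le k$. As the support of $\varphi_B$ is the initial segment $\set{0,\dots,\operatorname{ir}R}$, this window avoids the support exactly when its least index exceeds $\operatorname{ir}R$, that is $k-d+1>\operatorname{ir}R$, i.e. $k\ge \operatorname{ir}R+d=\reg R+d$, which is the desired equivalence.

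The step I expect to be the main obstacle is the middle one: verifying that $\varphi_B$ has no internal gaps, so that its support is genuinely the full initial segment $\set{0,\dots,\operatorname{ir}R}$ and not merely a subset of it. Everything hinges on this, because a gap in the support would force $A$ to vanish in some degree below $\reg R+d$ and break the ``only if'' direction. The point I would emphasize is that the absence of gaps follows from $B$ being standard graded together with the minimality built into the definition of $\operatorname{ir}R$, and that a degree-one regular element $t_e$ is available precisely because every monomial is regular on $R$.
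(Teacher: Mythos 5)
Your proof is correct, and it runs on the same chassis as the paper's --- the short exact sequence \eqref{eq: L478}, passage to Hilbert functions, and the identity $\reg R = \operatorname{ir} R$ for the one-dimensional Cohen--Macaulay ring $R=K[E_G]/I(X_G)$ --- but the decisive middle step is executed by a genuinely different mechanism. The paper never introduces the Artinian reduction $B=R/t_eR$: it observes instead that $A=K[E_G]/(I(X_G),\t^\delta)$ is itself standard graded, so $A_k=0$ forces $A_i=0$ for all $i\geq k$; via the exact sequence this says $\varphi_R(i)=\varphi_R(i-d)$ for all $i\geq k$, i.e.\ $\varphi_R$ is $d$-periodic from degree $k-d$ onward, and since $\varphi_R$ eventually equals the constant Hilbert polynomial, periodicity forces $\varphi_R$ to be constant from $k-d$ on, which is exactly $\operatorname{ir}R\leq k-d$. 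So where you quantify over a single degree $k$ and telescope, the paper quantifies over all $i\geq k$; and the no-gap fact you correctly flagged as the crux --- that the support of the first difference $\varphi_B$ is an initial segment, extracted from standard-gradedness of $B$ --- is in the paper replaced by the analogous upward-propagation of vanishing in $A$, which sidesteps any discussion of where $\varphi_R$ strictly increases. The paper's version is shorter; yours proves slightly more along the way, namely the exact formula $\dim_K A_k=\sum_{j=k-d+1}^{k}\varphi_B(j)$ and the strict monotonicity of $\varphi_R$ on $\set{0,\dots,\operatorname{ir}R}$, which makes the ``only if'' direction quantitatively transparent (every degree below $\reg R+d$ carries a nonzero component of computable dimension). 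Your identification $s=\operatorname{ir}R$ is also sound, since the Hilbert polynomial of the one-dimensional ring $R$ is the constant given by its multiplicity, so the minimality in the definition of $\operatorname{ir}R$ pins down the endpoint of the support exactly as you say.
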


\begin{proof}
Let $\varphi$ denote the Hilbert function of $K[E_G]/I(X_G)$.
Then the quotient $K[E_G]/(I(X_G),\t^\alpha)$ is zero in degree $k$ if and only if $K[E_G]/(I(X_G),\t^\alpha)$ is zero in 
every degree $i\geq k$. By \eqref{eq: L478}, this is equivalent to $\varphi(i-d)=\varphi(i)$, for every 
$i\geq k$, which holds if and only if $\varphi$ is constant from $k-d$ and on, i.e., if and only if, $\reg K[E_G]/I(X_G)\leq k-d$. 
\end{proof}

\begin{prop}\label{prop: special replacement}
Let $\t^\delta \in K[E_G]$ be a monomial. Then,
$\t^\alpha \in (I(X_G),\t^\delta)$ if and only if 
there exists a monomial $\t^\beta \in K[E_G]$
such that $\t^\alpha - \t^\beta$ is homogeneous, belongs to $I(X_G)$, and
$\t^\delta \mid \t^\beta$.
\end{prop}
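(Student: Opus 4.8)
The statement has two implications, and the reverse one is immediate: if $\t^\beta$ is a monomial with $\t^\delta\mid\t^\beta$ and $\t^\alpha-\t^\beta\in I(X_G)$, write $\t^\beta=\t^\delta\t^\gamma$; then $\t^\alpha=(\t^\alpha-\t^\beta)+\t^\delta\t^\gamma\in(I(X_G),\t^\delta)$. The whole content of the proposition is therefore the forward implication, where from the plain membership $\t^\alpha\in(I(X_G),\t^\delta)$ I must manufacture a single \emph{monomial} $\t^\beta$, divisible by $\t^\delta$ and congruent to $\t^\alpha$ modulo $I(X_G)$.

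The tool I would use is a binomial Gr\"obner basis $\G$ of $I(X_G)$, which exists by Corollary~\ref{cor: the ideal is binomial} together with the Buchberger argument of Proposition~\ref{prop: initial ideal is independent of the field}. Its decisive feature is that the normal form (remainder) of a \emph{monomial} modulo $\G$ is again a single monomial, since reducing a multiple of a leading term $\t^u$ by a binomial $\t^u-\t^v$ just replaces it by the corresponding multiple of $\t^v$. Writing $N(\t^a)$ for this normal form, the standard monomials form a $K$-basis of $R:=K[E_G]/I(X_G)$, and for monomials one has $\t^a\equiv\t^b\pmod{I(X_G)}$ if and only if $N(\t^a)=N(\t^b)$. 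With this established, I would pass to $R$: the hypothesis says exactly that $\overline{\t^\alpha}\in\overline{\t^\delta}\,R$. Since $R$ is graded and $\overline{\t^\alpha}$ is homogeneous of degree $\deg\t^\alpha$, projecting onto that graded piece lets me write $\overline{\t^\alpha}=\overline{\t^\delta}\,\bar h$ with $h$ homogeneous of degree $\deg\t^\alpha-\deg\t^\delta$; here $\deg\t^\alpha\ge\deg\t^\delta$ because $\overline{\t^\alpha}\neq 0$, as $\t^\alpha$ is regular by Proposition~\ref{prop: one-dimensional and Cohen-Macaulay}.

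The extraction of a monomial is then the crux. Expanding $h=\sum_i c_i\t^{\gamma_i}$ and applying the ($K$-linear) normal form gives the identity $N(\t^\alpha)=\sum_i c_i\,N(\t^{\delta+\gamma_i})$ among standard monomials. Each term on the right is a single standard monomial, while the left-hand side is the single standard monomial $N(\t^\alpha)$ with coefficient $1$; comparing the coefficient of $N(\t^\alpha)$ and using linear independence of standard monomials forces some index $i_0$ with $N(\t^{\delta+\gamma_{i_0}})=N(\t^\alpha)$, that is, $\t^\alpha\equiv\t^{\delta+\gamma_{i_0}}\pmod{I(X_G)}$. Setting $\t^\beta:=\t^{\delta+\gamma_{i_0}}$ gives $\t^\delta\mid\t^\beta$, and $\t^\alpha-\t^\beta\in I(X_G)$ is homogeneous because $\deg\t^\beta=\deg\t^\delta+\deg\t^{\gamma_{i_0}}=\deg\t^\alpha$.

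The main obstacle is precisely this last passage from an arbitrary coefficient vector $(c_i)$ to a single monomial: it is what makes the proposition nontrivial, and it rests entirely on the binomial structure of $I(X_G)$, through the fact that normal forms of monomials remain monomial-valued and that standard monomials are linearly independent. An equivalent and perhaps conceptually cleaner route, which I would mention, is to observe that $I(X_G)$ contains no monomial (each monomial is regular, hence nonzero in $R$), so that $R$ is the monoid algebra of $\NN^{E_G}/\!\sim$ and $\overline{\t^\delta}\,R$ has the explicit monomial basis $\set{\overline{\t^{\delta+\gamma}} : \gamma\in\NN^{E_G}}$; membership of $\overline{\t^\alpha}$ then yields $\t^\alpha\equiv\t^{\delta+\gamma}$ for some $\gamma$ at once. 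I would, however, present the normal-form version, as it is the most self-contained.
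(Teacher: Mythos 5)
Your proof is correct, and it takes a genuinely different route from the paper's. The paper attacks the augmented ideal $(I(X_G),\t^\delta)$ head-on: it runs Buchberger's algorithm on $\G\cup\set{\t^\delta}$ and shows, by induction on the steps of the algorithm, that every new element produced is a monomial $\t^{\mu_i}$ admitting a companion monomial $\t^{\beta_i}$ with $\t^{\mu_i}-\t^{\beta_i}$ homogeneous, in $I(X_G)$, and divisible by $\t^\delta$; the conclusion is then extracted from the division of $\t^\alpha$ by the resulting Gr\"obner basis $\G\cup\set{\t^{\mu_1},\dots,\t^{\mu_r}}$, using that the division terminates the first time a monomial generator is invoked. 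You never enlarge the ideal at all: you take the degree-$\deg\t^\alpha$ homogeneous component of a membership equation $\t^\alpha=g+h\,\t^\delta$, apply the $K$-linear normal-form operator attached to a pure-difference binomial Gr\"obner basis of $I(X_G)$ (available by Propositions~\ref{prop: general result to get binomial ideal} and~\ref{prop: initial ideal is independent of the field}), and locate the index $i_0$ by comparing the coefficient of the standard monomial $N(\t^\alpha)$ on both sides, using linear independence of standard monomials. Both arguments run on the same engine --- reduction modulo pure differences $\t^u-\t^v$ sends a monomial to a single monomial with unchanged coefficient --- but yours replaces the paper's bookkeeping through Buchberger's algorithm (and the subtlety about when division first uses a monomial divisor) by one linear-algebra step, so it is shorter and more self-contained; the paper's version buys, as a by-product, an explicit monomial-augmented Gr\"obner basis of $(I(X_G),\t^\delta)$ in which every added monomial is congruent modulo $I(X_G)$ to a multiple of $\t^\delta$, which is strictly more information than the proposition asserts. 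One small wording point in your closing aside: $\set{\overline{\t^{\delta+\gamma}} : \gamma\in\NN^{E_G}}$ is a spanning set of $\overline{\t^\delta}R$ whose \emph{distinct} elements are linearly independent, not literally a basis (it lists elements with repetition); this does not affect anything, since your main normal-form argument is complete as written.
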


\begin{proof}
Fix a monomial order on $K[E_G]$. Let $\G$ be a Gr\"obner basis of $I(X_G)$ obtained as in
the proofs of Proposition~\ref{prop: general result to get binomial ideal} or 
Proposition~\ref{prop: initial ideal is independent of the field}. Then each element 
of $\G$ is a homogeneous binomial, $\t^\alpha-\t^\beta$. Furthermore, we may assume, 
without loss of generality, that $\operatorname{lt}(\t^\alpha- \t^\beta)=\t^\alpha$.
\smallskip 

\noindent
We claim that $(I(X_G),\t^\delta)$
has a Gr\"obner basis of the form $\G\cup \set{\t^{\mu_1},\dots,\t^{\mu_r}}$, where,
for each $i=1,\dots,r$, there exists $\t^{\beta_i} \in K[E_G]$ such that $\t^{\mu_i}-\t^{\beta_i}$ is homogeneous, 
belongs to $I(X_G)$ and $\t^\delta \mid \t^{\beta_i}$.
To prove this claim it suffices to show that there exists an application of Buchberger's algorithm 
which, starting from $\G\cup \set{\t^\delta}$, produces in step $i$ a set
$$
\G_i = \G \cup \set{\t^{\mu_1},\dots,\t^{\mu_{i+1}}}
$$
with the stated properties. We prove this by induction on $i\geq 0$. If $i=0$, the algorithm is in the initialization 
step and hence $\G_0 = \G \cup \set{\t^\delta}$. It suffices to set $\mu_1=\beta_1 = \delta$. Now fix $i>0$ and 
assume Buchberger's algorithm has not finished in the step $i-1$. Then there is an $S$-polynomial which does not reduce to zero 
modulo $\G_{i-1}=\G \cup \set{\t^{\mu_1},\dots,\t^{\mu_{i}}}$. Since the $S$-polynomial of two monomials is zero and the $S$-polynomial 
of two elements of $\G$ reduces to zero modulo $\G$, the $S$-polynomial in question must be 
of some $\t^{\mu_k}\in \set{\t^{\mu_1},\dots,\t^{\mu_{i}}}$ and $\t^\alpha-\t^\beta \in \G$. Let us
write
\begin{equation}\label{eq: L87}
\ts S(\t^{\mu_k},\t^\alpha-\t^\beta) = \t^{\mu'}\t^{\mu_k} - \t^{\alpha'}(\t^\alpha -\t^\beta)  = \t^{\alpha'}\t^\beta,
\end{equation}
where $\t^{\mu'}$ and $\t^{\alpha'}$ are such that $\t^{\mu'}\t^{\mu_k} = \t^{\alpha'}\t^\alpha = \operatorname{lcm}(\t^{\mu_k},\t^\alpha)$.
Let $r$ be the remainder of $S(\t^{\mu_k},\t^{\alpha}-\t^{\beta})$ in its standard expression with respect to $\G_{i-1}$. Since 
$S(\t^{\mu_k},\t^{\alpha}-\t^{\beta})$ is a monomial and $r\not = 0$, to obtain $r$ only division by the elements of $\G$ is carried out.
Since division of a monomial by a binomial yields a monomial of the same degree, 
we deduce that $r=\t^{\mu_{i+1}}$, for some monomial $\t^{\mu_{i+1}} \in K[E_G]$ with $\deg(\t^{\mu_{i+1}})= \deg(\t^{\alpha'}\t^\beta)$, and 
that there exists $g\in I(X_G)$ such that
$$
S(\t^{\mu_k},\t^{\alpha}-\t^{\beta}) = g + r \iff  \t^{\alpha'}\t^\beta= g + \t^{\mu_{i+1}} \iff \t^{\mu_{i+1}}-\t^{\alpha'}\t^\beta \in I(X_G).
$$
Using \eqref{eq: L87} and the induction hypothesis,
$$
\t^{\mu_{i+1}} - \t^{\mu'}\t^{\mu_k} \in I(X_G) \iff \t^{\mu_{i+1}} - \t^{\mu'}\t^{\beta_k} \in I(X_G), 
$$
where $\deg(\t^{\mu_k})=\deg(\t^{\beta_k})$. As 
$$
\ts \deg(\t^{\mu_{i+1}})=\deg(\t^{\alpha'}\t^\beta)=\deg(\t^{\alpha'}\t^\alpha) =\deg(\t^{\mu'}\t^{\mu_k}) = \deg(\t^{\mu'}\t^{\beta_k}),
$$
if we set $\beta_{i+1}=\mu'+\beta_k$, we see that $\G_i=\G \cup \set{\t^{\mu_1},\dots,\t^{\mu_{i+1}}}$, 
obtained in this step, satisfies the properties of the claim.
\smallskip

\noindent
Let us now use the Gr\"obner basis $\G \cup \set{\t^{\mu_1},\dots, \t^{\mu_r}}$ of the ideal $(I(X_G),\t^\delta)$
to prove this proposition. Let $\t^\alpha \in K[E_G]$ belong to this ideal. Then the remainder in its standard expression 
with respect to  $\G \cup \set{\t^{\mu_1},\dots, \t^{\mu_r}}$ is zero. As the remainder of the division of $\t^\alpha$ 
by a binomial is a monomial of the same degree, the division of $\t^\alpha$ by the elements of 
the Gr\"obner basis finishes the first time we use an element of 
the set $\set{\t^{\mu_1},\dots, \t^{\mu_r}}$. This means that there exists
$k\in \set{1,\dots,r}$, $\t^{\alpha'} \in K[E_G]$ and $g\in I(X_G)$ such that 
\begin{equation}\label{eq: L112}
\t^\alpha = g + \t^{\alpha'} \t^{\mu_k} \iff \t^\alpha - \t^{\alpha'} \t^{\mu_k} \in I(X_G),
\end{equation}
with $ \t^\alpha - \t^{\alpha'} \t^{\mu_k}$ homogeneous.
Let $\t^{\beta_k}\in K[E_G]$ be such that $\t^\delta \mid \t^{\beta_k}$ and such that $\t^{\mu_k}-\t^{\beta_k}$ is homogeneous and belongs to $I(X_G)$. Then, 
setting $\t^\beta = \t^{\alpha'}\t^{\beta_k}$, we see that $\t^\alpha-\t^\beta$ is homogeneous, by \eqref{eq: L112} that
it belongs to $\t^{\alpha} - \t^\beta \in I(X_G)$, and that $\t^\delta \mid \t^\beta$. 
We have proved one implication in the statement of the proposition, the other is trivial.
\end{proof}

\section{Joins and ears of graphs}\label{sec: joins and ears}

\subsection{Regularity and the maximum vertex join number} 
We now derive the bounds for the regularity of $I(X_G)$ mentioned in the introduction of this 
article. The lower bound, which
is the maximum vertex join number of the graph, gives the value of $\reg I(X_G)$ in the bipartite case. 
The proofs of this section rely on 
Propositions~\ref{prop: computing reg by reducing to Artinian quotient} and \ref{prop: special replacement}.

\begin{definition}{\cite{Fr93,SoZa93}}
A \emph{join} of a graph, $G$, is a set of edges, $J\subseteq E_G$, such 
that, for every circuit $C$ in $G$, $|J\cap E_C|\leq |E_C|/2$.
\end{definition}

Recall from Definition~\ref{def: max vertex join number}, that the maximum cardinality of a join of $G$ is called
the maximum vertex join number and is denoted by $\mu(G)$.

\begin{theorem}\label{thm: reg bigger than join} $\reg I(X_G)\geq \mu(G)$.
\end{theorem}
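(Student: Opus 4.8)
The plan is to pass to an Artinian reduction and exhibit a monomial that survives in the correct degree, using the maximum join to control which monomials can be ``replaced'' modulo $I(X_G)$. The starting point is a reformulation of Proposition~\ref{prop: even degrees}: for two monomials $\t^\alpha,\t^\beta\in K[E_G]$ of the same degree, one has $\t^\alpha-\t^\beta\in I(X_G)$ if and only if $p_\alpha=p_\beta$, where $p_\gamma\in\ZZ_2^{V_G}$ is the \emph{parity vector} $p_\gamma(v)=\sum_{e\ni v}\gamma(e)\bmod 2$. Indeed, after cancelling $\gcd(\t^\alpha,\t^\beta)$ (legitimate since monomials are regular modulo $I(X_G)$), the quantity $\deg_H(v)$ of Proposition~\ref{prop: even degrees} is congruent to $p_\alpha(v)+p_\beta(v)$, so the even-degree condition there is exactly $p_\alpha=p_\beta$. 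I record that $p_\gamma=\partial S_\gamma$ is the $\ZZ_2$-boundary of the \emph{odd support} $S_\gamma=\set{e:\gamma(e)\text{ odd}}$, and that $|S_\gamma|\le\deg\t^\gamma$ with $|S_\gamma|\equiv\deg\t^\gamma\pmod 2$.

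Next I fix a maximum join $J$, so $|J|=\mu(G)$ (assume $\mu(G)\ge 1$, the bound being trivial otherwise), pick $e_0\in J$, and set $\t^\delta=t_{e_0}$ and $\t^\alpha=\prod_{e\in J\setminus\set{e_0}}t_e$, a monomial of degree $\mu(G)-1$. By Proposition~\ref{prop: computing reg by reducing to Artinian quotient} applied with $d=1$, it suffices to show $\t^\alpha\notin(I(X_G),t_{e_0})$: this makes the Artinian quotient nonzero in degree $\mu(G)-1$, hence $\reg K[E_G]/I(X_G)\ge\mu(G)-1$, i.e. $\reg I(X_G)=\reg K[E_G]/I(X_G)+1\ge\mu(G)$. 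By Proposition~\ref{prop: special replacement}, membership $\t^\alpha\in(I(X_G),t_{e_0})$ would produce a monomial $\t^\beta$ of degree $\mu(G)-1$ with $\t^\alpha-\t^\beta\in I(X_G)$ and $t_{e_0}\mid\t^\beta$. Writing $\t^\beta=t_{e_0}\t^\gamma$ and using the parity reformulation, $p_\gamma=p_\alpha+\partial\set{e_0}=\partial(J\setminus\set{e_0})+\partial\set{e_0}=\partial J$; thus the odd support $S=S_\gamma$ satisfies $\partial S=\partial J$ and $|S|\le\deg\t^\gamma=\mu(G)-2$. So the whole statement reduces to the combinatorial claim that \emph{no} edge set $S$ with $\partial S=\partial J$ has $|S|\le\mu(G)-2$.

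I will in fact show $|S|\ge\mu(G)$ for every $S$ with $\partial S=\partial J$, i.e. that $J$ has minimum cardinality in its boundary class. Any such $S$ has the form $S=J\triangle Z$ with $Z$ in the $\ZZ_2$-cycle space, and $|S|=|J|+|Z|-2\,|J\cap Z|$, so the claim amounts to the inequality $|J\cap Z|\le|Z|/2$ for \emph{all} cycle-space elements $Z$. This upgrade from circuits to the full cycle space is the one genuinely nontrivial point, and I expect it to be the main obstacle; a priori the join hypothesis only furnishes $|J\cap E_C|\le|E_C|/2$ for circuits $C$. It is resolved by Veblen's theorem: an element $Z$ of the $\ZZ_2$-cycle space is an even subgraph and therefore decomposes as an edge-disjoint union of circuits $Z=C_1\sqcup\cdots\sqcup C_m$ of $G$. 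Summing the join inequality over the pieces gives $|J\cap Z|=\sum_i|J\cap E_{C_i}|\le\sum_i|E_{C_i}|/2=|Z|/2$, whence $|S|\ge|J|=\mu(G)$, contradicting $|S|\le\mu(G)-2$. Consequently no admissible $\t^\beta$ exists, so $\t^\alpha\notin(I(X_G),t_{e_0})$ and the bound $\reg I(X_G)\ge\mu(G)$ follows. The remaining degree-and-parity bookkeeping is routine; the content is entirely in the circuit-to-cycle-space inequality above.
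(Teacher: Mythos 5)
Your argument is correct, and it assembles the same ingredients as the paper's proof --- the Artinian reduction of Proposition~\ref{prop: computing reg by reducing to Artinian quotient}, the membership criterion of Proposition~\ref{prop: special replacement}, the even-degree characterization of Proposition~\ref{prop: even degrees}, and the decomposition of even subgraphs into edge-disjoint circuits --- but it organizes the combinatorial core genuinely differently. The paper argues by induction on subsets $J'\subseteq J\setminus\set{e}$, and the induction hypothesis serves only to force $\gcd(\t^\alpha,\t^\beta)=1$ so that Proposition~\ref{prop: even degrees} applies to the subgraph $H$; the contradiction is then extracted by applying the join inequality twice, first to $J'$ (forcing $|E_H|=2\deg(\t^\alpha)$, hence $e\in E_H$) and then to $J'\cup\set{e}$. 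You dispatch the $\gcd$ issue once and for all via the $\ZZ_2$-parity reformulation of Proposition~\ref{prop: even degrees}, valid for arbitrary equal-degree pairs of monomials because monomials are regular modulo $I(X_G)$; this lets you treat only the full product $\prod_{f\in J\setminus\set{e_0}}t_f$ with no induction: its membership in $(I(X_G),t_{e_0})$ would produce an edge set $S$ with $\partial S=\partial J$ and $|S|\leq\mu(G)-2$, refuted by a single application of the circuit decomposition to $Z=S\,\triangle\,J$, i.e., by the fact that a join is a minimum-cardinality representative of its $\ZZ_2$-boundary class (in effect, Frank's join condition read as the minimum-$T$-join certificate $|J\cap Z|\leq|Z|/2$ over the whole cycle space, upgraded from circuits by the same Veblen/Bollob\'as decomposition the paper cites). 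Note that the paper's inductive statement --- every sub-product of $J\setminus\set{e}$ survives in the Artinian quotient --- is not actually stronger than yours, since a sub-product lying in $(I(X_G),t_e)$ would force the full product in as well; so your route buys a shorter, more conceptual proof exposing the $T$-join minimality behind the bound, while the paper's stays entirely within the subgraph language of Proposition~\ref{prop: even degrees} at the cost of the auxiliary induction.
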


\begin{proof}
Let $J$ be a join of $G$ and let us show that  
$$
\reg K[E_G]/I(X_G) \geq |J|-1.
$$ 
Fix $e \in J$. 
By Proposition~\ref{prop: computing reg by reducing to Artinian quotient} it suffices to show that there exists a monomial of degree 
$|J|-1$ which does not belong to $(I(X_G),t_e)$. 
We will show that the product of variables corresponding to the edges in 
any subset of $J$ that does not include $e$
satisfies this property. We argue by induction.
Starting with the base case, 
let $f \in J \setminus \set{e}$. Then, by Proposition~\ref{prop: special replacement}, 
$$
t_f \in (I(X),t_e) \iff t_f-t_e \in I(X),
$$
which, by Proposition~\ref{prop: even degrees} is not true.

\smallskip
\noindent 
Assume now that $J' \subseteq J \setminus \set{e}$ is a subset of $k$ edges, with $k \geq 2$, and let $\t^\alpha$ be the product of all 
variables corresponding to edges of $J'$. We want to show that 
$\t^\alpha \notin (I(X_G),t_e)$. By the induction hypothesis, if $\t^\gamma$ is 
the product of variables corresponding to $k-1$ or fewer edges of $J \setminus \set{e}$ then $\t^\gamma \notin (I(X_G),t_e)$. 
We argue by contradiction. Suppose that $\t^\alpha \in (I(X_G),t_e)$. By Proposition~\ref{prop: special replacement}, there exists a monomial $\t^\beta$
such that $\t^\alpha - \t^\beta$ is homogeneous, $\t^\alpha - \t^\beta \in I(X_G)$, and
$t_e \mid \t^\beta$. Let $\t^\gamma$, $\t^\mu$ be such that $\t^\alpha=\t^\gamma\gcd(\t^\alpha,\t^\beta)$ and
$\t^\beta=\t^\mu\gcd(\t^\alpha,\t^\beta)$. Since any monomial is a regular element of $K[E_G]/I(X_G)$, 
we get $\t^\gamma - \t^\mu \in I(X_G)$. Since 
$t_e$ still divides $\t^\mu$, we deduce that 
$$
\t^\gamma \in (I(X_G),t_e).
$$ 
But $\t^\gamma$ cannot be the product of fewer than $k$ of $J \setminus \set{e}$, for otherwise  
we would have a contradiction with our induction hypothesis. Therefore, $\operatorname{gcd}(\t^\alpha,\t^\beta)=1$.

\smallskip
\noindent 
Let $H$ be the subgraph of $G$ the edges of which 
correspond to variables occurring in $\t^\alpha$ or $\t^\beta$ raised to an odd power. Notice that 
$J'\subseteq E_H$. Then, by Proposition~\ref{prop: even degrees}, every vertex of $H$ has even degree. 
By \cite[Theorem~1]{Bo98}, we conclude that $H$ decomposes into a union of edge disjoint cycles. 
Let $C_l\subseteq H\subseteq G$, for $l=1,\dots,r$, be the cycles satisfying $E_H=\sqcup_{l} E_{C_l}$.
Since $J' \subseteq E_H$ and $J'$ is a join, we get:
\begin{equation}\label{eq: L728}
\ts \mbox{deg}(\t^\alpha) = |J'| = \sum_l  |J' \cap E_{C_l}| \leq \frac{1}{2} \sum_l |E_{C_l}|
= \frac{1}{2} |E_H| \,.
\end{equation}
But, as $\t^\alpha - \t^\beta$ is homogeneous, we know that $|E_H|\leq 2\deg(\t^\alpha)$. 
By \eqref{eq: L728} this implies that  $|E_H|= 2 \deg(\t^\alpha)$ from which we deduce 
that all variables in $\t^\beta$ occur raised to $1$ and that,
therefore, $E_H$ contains all edges corresponding to variables dividing $\t^\beta$. In particular, $e\in E_H$.
Considering now the join $J' \cup \set{e}\subseteq J$, we get
$$
\ts \deg(\t^\alpha) +1 = |J' \cup \{e\}| = \sum_l |(J' \cup \{e\}) \cap E_{C_l}| \leq \frac{1}{2} \sum_l |E_{C_l}| = \frac{1}{2} |E_H| = \deg(\t^\alpha),
$$
which is a contradiction. We conclude that $\t^\alpha \notin (I(X_G),t_e)$, and, thus, finish the proof of the induction step.
\end{proof}

\begin{prop}\label{prop: general upper bound for reg}
\begin{equation}\label{eq: upper bounds}
\reg I(X_G) \leq \begin{cases} 
|V_G|-b_0(G), & \text{if $G$ is bipartite} \\
|V_G|-b_0(G)+1, & \text{if $G$ is non-bipartite.}
\end{cases}
\end{equation}
\end{prop}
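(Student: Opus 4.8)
The plan is to reduce to the two extremal families already computed in Proposition~\ref{prop: known properties and values}, namely forests (item~(v)) and graphs carrying a single odd cycle (item~(vi)), by exploiting the monotonicity of the regularity under passage to spanning subgraphs recorded in item~(i). In both cases I would exhibit a spanning subgraph $H\subseteq G$ with $b_0(H)=b_0(G)$, of the same bipartiteness type as $G$, and whose regularity already equals the right-hand side of the asserted bound; the inequality $\reg I(X_G)\leq \reg I(X_H)$ from item~(i) then finishes the argument.

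First, suppose $G$ is bipartite. I would take $H$ to be a spanning forest of $G$, obtained by choosing a spanning tree in each connected component. Since $G$ has no isolated vertices, every component has at least one edge, so $H$ has no isolated vertices and $b_0(H)=b_0(G)$; moreover $H$ is a forest, hence bipartite, matching the bipartiteness of $G$. By Proposition~\ref{prop: known properties and values}(i) we get $\reg I(X_G)\leq \reg I(X_H)$, and by item~(v), $\reg I(X_H)=|V_H|-b_0(H)=|V_G|-b_0(G)$, which is exactly the claimed bound in the bipartite case.

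Next, suppose $G$ is non-bipartite. Then some component $G_0$ contains an odd cycle $C_0$, and I would construct a connected spanning subgraph $H_0$ of $G_0$ having $C_0$ as its unique cycle: starting from $C_0$, while some vertex of $G_0$ remains uncovered, add an edge of $G_0$ joining a covered vertex to an uncovered one (such an edge exists because $G_0$ is connected). Each added edge attaches a previously uncovered vertex and therefore creates no new cycle, so $H_0$ is connected, spans $G_0$, and has $C_0$ as its only cycle. Taking a spanning tree in every other component and letting $H$ be the union, one obtains a spanning subgraph with $b_0(H)=b_0(G)$, no isolated vertices, and exactly one cycle, which is odd; in particular $H$ is non-bipartite. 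Item~(i) then gives $\reg I(X_G)\leq \reg I(X_H)$, and item~(vi) yields $\reg I(X_H)=|V_H|-b_0(H)+1=|V_G|-b_0(G)+1$. The only genuine content is this construction in the non-bipartite case, and the step I expect to require the most care is guaranteeing that $H$ carries a \emph{single} cycle and that this cycle is \emph{odd}, so that item~(vi) applies verbatim; growing a spanning connected subgraph outward from a fixed odd cycle, adding only edges that cover a new vertex, is precisely what secures both properties.
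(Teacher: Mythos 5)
Your proposal is correct and follows essentially the same route as the paper: a spanning forest in the bipartite case and, in the non-bipartite case, a spanning subgraph built from a spanning connected unicyclic graph (with odd cycle) in one non-bipartite component plus spanning trees elsewhere, combined with items (i), (v) and (vi) of Proposition~\ref{prop: known properties and values}. Your explicit grow-from-the-odd-cycle construction just fills in a detail the paper leaves implicit, and your value $|V_H|-b_0(H)+1$ in the non-bipartite case is the correct one (the paper's displayed ``$-1$'' there is a typo, as item (vi) shows).
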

\begin{proof}
Suppose $G$ is bipartite. Let
$H$ be a subgraph of $G$ consisting of a spanning tree for each connected component of $G$. Then, by 
Proposition~\ref{prop: known properties and values}, 
$$
\reg I(X_G)\leq \reg I(X_H) = |V_G|-b_0(G).
$$
Suppose now that $G$ is non-bipartite and take $H$, as before, given by a spanning tree for every connected 
component of $G$, except for one of the non-bipartite components in which we take a spanning connected graph containing a single
odd cycle. Then $H$ and $G$ have the same number of connected components, they are both non-bipartite and $H$ is a graph with a single 
odd cycle. According to Proposition~\ref{prop: known properties and values},
$$
\reg I(X_G) \leq \reg I(X_H) = |V_H|-b_0(H)-1 = |V_G|-b_0(G)-1. \qedhere
$$
\end{proof}

\begin{rmk}
The graphs $H$ in the proof of Proposition~\ref{prop: general upper bound for reg}, 
a forest in the bipartite case and a graph with a unique odd cycle in the non-bipartite
case, are examples of graphs for which the upper bounds \eqref{eq: upper bounds} are attained.  
From this observation and the next theorem, we deduce that the bounds for $\reg I(X_G)$ are sharp.
\end{rmk}

\begin{theorem}\label{thm: reg for bipartites is max join}
If $G$ is a bipartite graph, then $\reg I(X_G) = \mu(G)$.
\end{theorem}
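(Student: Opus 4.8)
The plan is to establish both inequalities. The inequality $\reg I(X_G)\geq \mu(G)$ is already available as Theorem~\ref{thm: reg bigger than join} and holds for arbitrary graphs, so the whole task reduces to proving the reverse inequality $\reg I(X_G)\leq \mu(G)$ in the bipartite case. Since the regularity is additive over the blocks of a bipartite graph by Proposition~\ref{prop: known properties and values}(ii), and $\mu(G)$ is likewise additive over blocks (a circuit is contained in a single block, and a join is chosen block-by-block), I would first reduce to the case where $G$ is $2$-connected, i.e.\ a single block. This reduction is clean provided the blockwise additivity of $\mu$ is verified, which I expect to be routine.

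For the $2$-connected bipartite case, I would exploit the combinatorial characterization of $\mu(G)$ for bipartite graphs. A classical fact (going back to the sources \cite{Fr93,SoZa93} cited for the definition) is that for a bipartite graph the maximum vertex join number equals the maximum size of a set of edges meeting every cycle in at most half its edges, and this in turn is governed by a min-max formula. The key geometric input I would use is the interpretation via ear decompositions: a $2$-connected bipartite graph admits a \emph{nested} ear decomposition, and $\mu(G)$ can be read off from the lengths of the ears. I would combine this with the upper bound machinery: using Proposition~\ref{prop: general upper bound for reg} together with the spanning-subgraph monotonicity of Proposition~\ref{prop: known properties and values}(i), one reduces the regularity of $G$ to that of a well-chosen spanning subgraph $H$ whose regularity is computable, and whose value matches $\mu(G)$.

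The most efficient route, however, is probably to mirror the Artinian-reduction strategy used in the proof of Theorem~\ref{thm: reg bigger than join}. By Proposition~\ref{prop: computing reg by reducing to Artinian quotient}, to prove $\reg K[E_G]/I(X_G)\leq \mu(G)-1$ it suffices, after fixing an edge $e$, to show that \emph{every} monomial $\t^\alpha$ of degree $\mu(G)$ lies in $(I(X_G),t_e)$. Via Proposition~\ref{prop: special replacement}, this means producing a monomial $\t^\beta$ with $\t^\alpha-\t^\beta\in I(X_G)$ and $t_e\mid\t^\beta$. Using Proposition~\ref{prop: even degrees}, finding such a $\t^\beta$ is equivalent to finding an even subgraph $H$ (all degrees even) whose odd-power edge set reroutes the support of $\t^\alpha$ through $e$; in the bipartite setting such even subgraphs are unions of even cycles, which is exactly where the join condition $|J\cap E_C|\le |E_C|/2$ becomes tight. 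The strategy is thus to show that any degree-$\mu(G)$ multiset of edges must \emph{overload} some cycle relative to the join bound, forcing the existence of a balancing even cycle through $e$.

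The hard part will be this last combinatorial step: showing that degree $\mu(G)$ (rather than $\mu(G)-1$) already forces membership in $(I(X_G),t_e)$ for \emph{every} monomial, which is the converse of the tightness argument that drove Theorem~\ref{thm: reg bigger than join}. I expect the cleanest proof to avoid redoing this by hand and instead to invoke the transferred results from the vanishing-ideal theory: specializing to $K=\ZZ_3$ via Proposition~\ref{prop: Link to vanishing ideals} and Proposition~\ref{prop: regularity is independent of the field}, I would appeal to the ear-decomposition computations of the regularity of the vanishing ideal (the results of \cite{Ne} advertised in the introduction), which give $\reg$ in terms of ear lengths, and then verify that this ear-theoretic quantity coincides with $\mu(G)$ for bipartite graphs using the nested ear decomposition. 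Reconciling the algebraic formula for $\reg$ with the combinatorial formula for $\mu(G)$ is where the main effort lies, and it is precisely this identification that is intended to yield Corollary~\ref{cor: new combinatorial corollary}.
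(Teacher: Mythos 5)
Your middle paragraph correctly identifies the frame the paper actually uses --- fix an edge $e$ and, via Proposition~\ref{prop: computing reg by reducing to Artinian quotient}, show that \emph{every} monomial $\t^\alpha$ of degree $\mu(G)$ lies in $(I(X_G),t_e)$ --- but you then leave precisely the decisive step open (``the hard part will be this last combinatorial step''), and the mechanism you gesture at is not the one that works. The paper does not hunt for a ``balancing even subgraph through $e$'' via Proposition~\ref{prop: even degrees}; Proposition~\ref{prop: special replacement} and Proposition~\ref{prop: even degrees} are the engine of the \emph{lower} bound (Theorem~\ref{thm: reg bigger than join}), not of this direction. Instead: after disposing of the cases $t_e\mid \t^\alpha$ and $t_f^2\mid\t^\alpha$ (the latter using $t_f^2-t_e^2\in I(X_G)$), one may assume $\t^\alpha$ squarefree with support $E_H$ not containing $e$; then $|E_H\cup\set{e}|=\mu(G)+1$ forces $E_H\cup\set{e}$ to fail the join condition on some circuit, w.l.o.g.\ an \emph{even cycle} $C$ (bipartiteness is used exactly here), with $|(E_H\cup\set{e})\cap E_C|>|E_C|/2$. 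The argument then \emph{localizes to a small subgraph whose regularity is already known}: if $e\in E_C$ one works in $C$ itself, where $\reg I(X_C)=|E_C|/2$ by Proposition~\ref{prop: known properties and values}(iv); if $e\notin E_C$ one works in $C'=C\cup\set{e}$, splitting according to whether $e$ meets $C$ in two vertices (then $C'$ is Hamiltonian bipartite, (iv) again) or in at most one vertex (then $C$ and $\set{e}$ are blocks of $C'$, use (ii)). In each case $|E_H\cap E_C|$ exceeds $\reg$ of the quotient for that subgraph, so Proposition~\ref{prop: computing reg by reducing to Artinian quotient} applied \emph{inside the subgraph} puts the submonomial $\t^\beta$ supported on $E_H\cap E_C$ into $(I(X_{C'}),t_e)$, and Corollary~\ref{cor: relation of the ideal with the ideal with respect to a subgraph} lifts this to $(I(X_G),t_e)$ since $\t^\beta\mid\t^\alpha$. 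This localization-plus-lifting, with its case analysis, is the whole content of the proof and is absent from your sketch.

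Your proposed escape route is moreover unsound. The claim that ``a $2$-connected bipartite graph admits a nested ear decomposition'' is false: by \cite{Ep92}, graphs with nested ear decompositions are exactly the two-terminal series-parallel graphs, and the paper's own example (Figure~\ref{fig: non nested ear decomp}) is a $2$-connected bipartite graph none of whose ear decompositions is nested. Consequently the results of \cite{Ne}, which apply only to bipartite graphs \emph{endowed with} a nested ear decomposition, cannot prove the theorem for all bipartite graphs; and indeed the paper's logical flow is the reverse of yours --- the theorem is proved first by the direct argument above, and only then combined with \cite{Ne} and Frank's formula \eqref{eq: relation between mu and phi} to obtain Corollary~\ref{cor: new combinatorial corollary}. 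Finally, your opening reduction to the $2$-connected case would require proving blockwise additivity of $\mu$, which you flag as routine but do not do; the paper's proof needs no such reduction, since the join-violation argument is global.
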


\begin{proof}
We must show that 
\begin{equation}\label{eq: L923}
\reg K[E_G]/I(X_G) \leq \mu(G) - 1.
\end{equation}
Fix $t_e \in E_G$. According to Proposition~\ref{prop: computing reg by reducing to Artinian quotient},
to prove \eqref{eq: L923} it suffices to show that any monomial $\t^\alpha \in K[E_G]$ of degree $\mu(G)$ 
belongs to $(I(X_G), t_e)$. 
If $t_e \mid \t^\alpha$, we are done. Assume that $t_e\nmid \t^\alpha$. 
Suppose now that $t_f^2 \mid  \t^\alpha$, for some $f \neq e$. Then, 
setting $\t^\gamma = \t^\alpha / t^2_{f}$, 
$$
\t^\alpha = t_{f}^2\t^\gamma - t_{e}^2\t^\gamma + t_{e}^2\t^\gamma
= (t_{f}^2-t_{e}^2)\t^\gamma + t_{e}^2\t^\gamma \in (I(X_G),t_{e}).
$$
Hence we may assume that $t_{e}\nmid \t^\alpha$ and that $t_{f}^2\nmid  \t^\alpha$
for all $f\in E_G$. Let us denote by $H$ the subgraph of $G$ on the set of edges corresponding to the variables occurring in 
the monomial  $\t^\alpha$. Then
$$
|\,E_H \cup \{e\}\,| = \mu(G)+1
$$
and hence $E_H \cup \{e\}$ is not a join of $G$. I.e., there exists a circuit, $C$, in $G$ 
such that
\begin{equation}\label{eq: L957}
\ts |\,(E_H \cup \{e\})\, \cap \,E_C \,| > \,\frac{|E_C|}{2} \,.
\end{equation}
Since $G$ is bipartite, $C$ decomposes into an edge disjoint union of even cycles. Without loss
of generality, we may then assume that $C$ is an even cycle.
We will consider two cases.
In the first case, $e \in E_C$. Then 
$$\ts |E_H \cap E_C| \geq \frac{|E_C|}{2}.$$
Since, by Proposition~\ref{prop: known properties and values}, $\reg I(X_C) = \frac{|E_C|}{2}$,
we get 
$$
\reg K[E_C]/I(X_C) \leq |E_H \cap E_C|-1.
$$
We deduce, by Proposition~\ref{prop: computing reg by reducing to Artinian quotient}, that any 
monomial in $K[E_C]$ of degree $|E_H \cap E_C|$, belongs to the ideal $(I(X_C), t_{e})$.
Let $\t^\beta$ be the monomial in $K[E_C]$ given by the multiplication of the variables corresponding to 
the edges of $E_H \cap E_C$. Then,  
$\t^\beta \in (I(X_C), t_{e})$. Since,
by Corollary~\ref{cor: relation of the ideal with the ideal with respect to a subgraph}, 
$I(X_C)=I(X_G) \cap K[E_C]$ and $\t^\beta | \, \t^\alpha$, we conclude that
$\t^\alpha \in (I(X_G), t_{e})$, as desired. 

\noindent
In the second case, $e \notin E_C$. Then, from \eqref{eq: L957}, we get 
\begin{equation}\label{eq: L820}
\ts|E_H \cap E_C| \geq \frac{|E_C|}{2} + 1 = \reg K[E_C]/I(X_C) +2.
\end{equation}
Consider the graph $C'=C \cup \set{e}\subseteq G$. 
If $C$ and $\set{e}$ have two vertices in common, then 
$C'$ is Hamiltonian and, by Proposition~\ref{prop: known properties and values},
$$
\reg K[E_{C'}] / I(X_{C'}) = \reg K[E_{C}] / I(X_{C}).
$$ 
If $C$ and $\set{e}$ have either no vertex in common or just one vertex in common, 
then they are blocks of $C'$ and, by Proposition~\ref{prop: known properties and values},
$$
\reg K[E_{C'}] / I(X_{C'}) = \reg K[E_{C}] / I(X_{C})+1.
$$
In both cases, using \eqref{eq: L820}, we get: 
$$
|E_H \cap E_C| \geq {\reg} K[E_{C'}]/I(X_{C'}) +1.
$$ 
By Proposition~\ref{prop: computing reg by reducing to Artinian quotient}, this implies that 
any monomial in $K[E_{C'}]$ of degree \mbox{$|E_H \cap E_C|$}, belongs to 
the ideal $(I(X_{C'}), t_e)$.
Let $\t^\beta\mid \t^\alpha$ be the monomial given by the multiplication of the variables corresponding to 
the edges of $E_H \cap E_C$. 
Then 
$$
\t^\beta \in (I(X_{C'}), t_e) \subseteq (I(X_G), t_e),
$$ 
and thus $\t^\alpha \in (I(X_G), t_e)$, as we wanted.
\end{proof}

\begin{rmk}
Proposition~\ref{prop: general upper bound for reg} and Theorem~\ref{thm: reg for bipartites is max join} yield 
$\mu(G) \leq |V_G|-b_0(G)$, for any bipartite graph. A better bound can be achieved using \cite[Corollary~3.5]{SoZa93} 
and the additivity of $\reg I(X_G)$ along connected components. 
Let $G_1,\dots,G_r$ be the connected components of a bipartite graph and,
for each $i$, let $c_i$ be the length of the longest circuit in $G_i$, i.e., the \emph{circumference} of $G_i$. Then
$$
\ts \reg I(X_G) = \mu(G) \leq |V_G| - \sum_i \frac{c_i}{2}.
$$
\end{rmk}

\subsection{Regularity and Nested Ear decompositions} 
The notion of ear decomposition of a graph is involved in
Whitney's Theorem, which states that their existence 
is equivalent to the $2$-connectedness of the graph. 
Let us recall the definition of ear decomposition. 

\begin{definition}
An ear decomposition of $G$ is of a collection of $r>0$ subgraphs
$\P_0$, $\P_1,\dots,\P_r$, 
the edge sets of which form a partition of $E_G$, such that 
$\P_0$ is a vertex and, for all $1\leq i \leq r$, $\P_i$ 
is a path with end-vertices in $\P_0\cup \cdots \cup \P_{i-1}$ and with \emph{none} 
of its inner vertices in $\P_0\cup \cdots \cup \P_{i-1}$.
\end{definition}

The paths $\P_1,\dots, \P_r$ are called \emph{ears} of the decomposition. Their number, for distinct decompositions
of a graph, does not change, as each new ear increases the genus of the construction by one. 
However the number of ears of even length, and therefore the number of odd length ears, 
may change, as we show in the following example.

\begin{examp}\label{examp: a non nested ear decomp}
Consider the graph, $G$, in Figure~\ref{fig: non nested ear decomp}. It is a Hamiltonian bipartite graph, 
with Hamiltonian cycle $(1,2,5,4,3,6,1)$. 
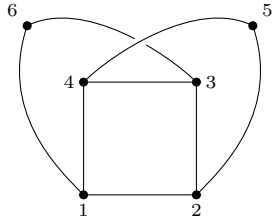
\begin{figure}[ht]
\begin{center}
\begin{tikzpicture}[line cap=round,line join=round, scale=1.5]
\draw [fill=black] (-.5,0) circle (1pt) node[anchor = north] {$\ss 1$};
\draw [fill=black] (.5,0) circle (1pt) node[anchor = north] {$\ss 2$};
\draw [fill=black] (-.5,1) circle (1pt) node[anchor = east] {$\ss 4$};
\draw [fill=black] (.5,1) circle (1pt) node[anchor = west] {$\ss 3$};
\draw [fill=black] (-1,1.5) circle (1pt) node[anchor = south east] {$\ss 6$};
\draw [fill=black] (1,1.5) circle (1pt) node[anchor = south west] {$\ss 5$};

\draw (-.5,0) -- (.5,0);
\draw (-.5,1) -- (.5,1);
\draw (-.5,0) -- (-.5,1);
\draw (.5,0) -- (.5,1);
\draw (-1,1.5) .. controls (-1.25,.75) and (-.75,.25).. (-.5,0); 
\draw (-1,1.5) .. controls (-.5,1.75) and (.25,1.25).. (.5,1); 
\draw [white, fill=white] (0,1.35) circle (1.5pt) ;
\draw (1,1.5) .. controls (.5,1.75) and (-.25,1.25).. (-.5,1); 
\draw (1,1.5) .. controls (1.25,.75) and (.75,.25).. (.5,0); 
\end{tikzpicture}
\end{center}
\caption{A Hamiltonian bipartite graph.}
\label{fig: non nested ear decomp} 
\end{figure}
This cycle can be taken 
as the ear $\P_1$ of an ear decomposition starting from $\P_0=1$. The remaining ears are all edges, $\P_2=\set{2,3}$
and $\P_3=\set{1,4}$, for example, in this order. Another ear decomposition of $G$ is given by:
$$
\P_0=1,\quad \P_1=(1,2,3,4,1),\quad \P_2=(2,5,4), \quad \P_3=(3,6,1).
$$
Whereas the first ear decomposition has a single even length ear, the second has three.
\end{examp}

\begin{definition}
The minimum number of even length ears in an ear decomposition of a graph $G$ is denoted by $\varphi(G)$.  
\end{definition}
This definition was given in \cite{Fr93} and is related to L\`ovasz characterization of factor-critical graphs. 
In \cite[Theorem~4.5]{Fr93} it is shown that for a connected graph,
\begin{equation}\label{eq: relation between mu and phi}
\ts \mu(G) = \frac{\varphi(G) + |V_G|-1}{2}.
\end{equation}

A subclass of the class of graphs endowed with an ear decomposition, i.e., by Whitney's Theorem, a subclass of 
the class of $2$-connected graphs, consists of those graphs 
that admit a special type of ear decomposition, called \emph{nested ear decomposition}. 
This definition was given in \cite{Ep92} where it was shown that this class consists of all two-terminal series parallel graphs. 
They are interesting because of the recent work \cite{Ne} on the regularity of vanishing ideals. Let us recall the definition
of nested ear decomposition.

\begin{definition}\label{def: nested decompositions of a graph}
Let $\P_0$, $\P_1,\dots,\P_r$ be an ear decomposition of a graph, $G$. 
If a path $\P_i$ has both its end-vertices in $\P_j$ 
we say that $\P_i$ is nested in $\P_j$ and we define the corresponding \emph{nest interval} to be the subpath of $\P_j$ 
determined by the end-vertices of $\P_i$. 
An ear decomposition of $G$ is \emph{nested} if, 
for all $1\leq i\leq r$, the path $\P_i$ is nested in a previous subgraph of the decomposition, $\P_j$, with 
$j<i$, and, in addition,  
if two paths $\P_i$ and $\P_l$ are nested in $\P_j$, then either  
the correspon\-ding nest intervals in $\P_j$ have disjoint edge sets or one is contained in the other.
\end{definition}

It is easy to construct graphs endowed with nested ear decompositions. One can check that none of the 
ear decompositions given in Example~\ref{examp: a non nested ear decomp} is nested. 
In \cite[Theorem~4.4]{Ne} it is shown that the regularity of the vanishing ideal over a bipartite graph endowed with 
a nested ear decomposition with $\epsilon$ even ears is a function of $|V_G|$, $\epsilon$ and the order of the field.
From this result we derive the following.

\begin{prop}
Let $G$ be a bipartite graph endowed with nested ear decomposition with $\epsilon$ even ears. Then 
$\reg I(X_G) = \frac{|V_G|+\epsilon -1}{2}$.
\end{prop}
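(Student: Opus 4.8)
The final statement reads $\reg I(X_G) = \frac{|V_G|+\epsilon-1}{2}$ for a bipartite graph $G$ with a nested ear decomposition having $\epsilon$ even ears. The plan is to combine the two main ingredients already established: the field-independence of the regularity (Proposition~\ref{prop: regularity is independent of the field}), which lets me pass to the vanishing ideal via Proposition~\ref{prop: Link to vanishing ideals}, and the equality $\reg I(X_G)=\mu(G)$ for bipartite graphs (Theorem~\ref{thm: reg for bipartites is max join}). The whole proof should reduce to identifying the arithmetic quantity $\frac{|V_G|+\epsilon-1}{2}$ with $\mu(G)$.

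First I would invoke Theorem~\ref{thm: reg for bipartites is max join} to replace $\reg I(X_G)$ with $\mu(G)$, since $G$ is bipartite. The goal then becomes purely combinatorial: show $\mu(G)=\frac{|V_G|+\epsilon-1}{2}$. Next I would bring in Fraisse's formula \eqref{eq: relation between mu and phi}, which for a connected graph gives $\mu(G)=\frac{\varphi(G)+|V_G|-1}{2}$, where $\varphi(G)$ is the \emph{minimum} number of even-length ears over all ear decompositions of $G$. Comparing the two expressions, the claim is equivalent to $\varphi(G)=\epsilon$. Note that a graph admitting an ear decomposition is $2$-connected, hence connected, so \eqref{eq: relation between mu and phi} applies; I would also observe that a nested ear decomposition is in particular an ear decomposition, so its number of even ears, $\epsilon$, is an upper bound for the minimum $\varphi(G)$, giving $\varphi(G)\le\epsilon$ immediately.

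The main obstacle is the reverse inequality $\varphi(G)\ge\epsilon$, i.e.\ showing that the given nested decomposition already achieves the minimum possible number of even ears. Here I would lean on the companion result \cite[Theorem~4.4]{Ne}, referenced in the paragraph preceding the statement, which expresses the regularity of the vanishing ideal over a bipartite graph with a nested ear decomposition as a function of $|V_G|$, the number of even ears $\epsilon$, and the order $q$ of the field. Over $K=\ZZ_3$ that formula, combined with Proposition~\ref{prop: Link to vanishing ideals}, directly yields $\reg I(X_G)=\frac{|V_G|+\epsilon-1}{2}$, which is in fact the cleanest route and bypasses the need to argue about $\varphi(G)$ at all. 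Concretely, by Proposition~\ref{prop: regularity is independent of the field} I may fix $K=\ZZ_3$; by Proposition~\ref{prop: Link to vanishing ideals}, $I(X_G)$ is then the vanishing ideal over $G$; and substituting $q=3$ into the formula of \cite[Theorem~4.4]{Ne} gives the stated value.

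In summary, the cleanest proof has two short steps: reduce to the vanishing ideal over $\ZZ_3$ using field-independence, then specialize the known regularity formula from \cite{Ne} to $q=3$. I expect the only delicate point to be verifying that the hypotheses of \cite[Theorem~4.4]{Ne} (bipartite, nested ear decomposition with the stated count of even ears) match exactly, and that the $q$-dependent formula specializes correctly at $q=3$ to the $q$-free expression $\frac{|V_G|+\epsilon-1}{2}$; the combinatorial identity $\varphi(G)=\epsilon$ then emerges as a byproduct via Fraisse's formula \eqref{eq: relation between mu and phi}, which is precisely the new combinatorial corollary the introduction advertises.
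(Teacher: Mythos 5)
Your proof is correct and takes essentially the same route as the paper: fix $K=\ZZ_3$ via Proposition~\ref{prop: regularity is independent of the field}, identify $I(X_G)$ with the vanishing ideal via Proposition~\ref{prop: Link to vanishing ideals}, and specialize \cite[Theorem~4.4]{Ne} at $q=3$. The one detail you flag but leave unresolved --- that the formula in \cite{Ne} gives the regularity of the quotient ring, so one must add $1$ --- is exactly the adjustment the paper makes, and you are right to discard the detour through $\mu(G)$ and \eqref{eq: relation between mu and phi}, since $\varphi(G)=\epsilon$ is the downstream corollary rather than an available input.
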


\begin{proof}
By Proposition~\ref{prop: regularity is independent of the field} and Proposition~\ref{prop: Link to vanishing ideals}
we may take the value of the regularity, given in \cite[Theorem~4.4]{Ne}, for the field $K=\ZZ_3$, i.e.,
setting $q=3$. To the result we add $1$, since in \cite{Ne} $\reg G$  is the regularity
of the quotient of the polynomial ring by the vanishing ideal over $G$.
\end{proof}

If follows from this result that for a bipartite graph endowed with a nested ear decomposition
the number of ears of even length does not change among all nested ear decompositions of the graph. This conclusion was already drawn 
in \cite[Corollary~4.5]{Ne}. But now, by Theorem~\ref{thm: reg for bipartites is max join}, we know that 
$\reg I(X_G) = \mu(G)$. This, together with  \eqref{eq: relation between mu and phi},
yields the following stronger result. 

\begin{cor}\label{cor: new combinatorial corollary}
If $G$ is a bipartite graph endowed with a nested ear decomposition, then the number of even length ears
in any such decomposition coincides with $\varphi(G)$, the minimum number of even length ears in any ear decomposition
of $G$.
\end{cor}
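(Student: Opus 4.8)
The plan is to identify three different expressions for the same numerical quantity and then read off the desired equality by cancellation. The essential preliminary observation is that a graph carrying a nested ear decomposition is, by Whitney's Theorem, $2$-connected and hence connected; this is precisely the hypothesis under which Frank's relation \eqref{eq: relation between mu and phi} holds, so I would record the connectedness of $G$ at the very outset.

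First I would fix an arbitrary nested ear decomposition of $G$ and write $\epsilon$ for its number of even length ears. The Proposition immediately preceding this statement then gives $\reg I(X_G) = \tfrac{1}{2}(|V_G| + \epsilon - 1)$. Next, since $G$ is bipartite, Theorem~\ref{thm: reg for bipartites is max join} identifies this regularity with the maximum vertex join number, $\reg I(X_G) = \mu(G)$. Finally, using that $G$ is connected, \eqref{eq: relation between mu and phi} expresses $\mu(G) = \tfrac{1}{2}(\varphi(G) + |V_G| - 1)$. Chaining the three identities yields
$$
\tfrac{1}{2}\bigl(|V_G| + \epsilon - 1\bigr) = \reg I(X_G) = \mu(G) = \tfrac{1}{2}\bigl(\varphi(G) + |V_G| - 1\bigr),
$$
and subtracting the common terms forces $\epsilon = \varphi(G)$. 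As the nested ear decomposition was arbitrary and $\epsilon$ was its even ear count, this proves that every nested ear decomposition of $G$ has exactly $\varphi(G)$ even length ears.

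There is no genuine computational obstacle here; the argument is just a chaining of results already in hand, and the one place demanding care is the invocation of \eqref{eq: relation between mu and phi}, which is stated only for connected graphs, so I would make connectedness explicit rather than leave it implicit. The conceptual force lies in the fact that the preceding Proposition pins $\reg I(X_G)$ to a value depending only on $|V_G|$ and $\epsilon$: once this is matched against the intrinsic invariant $\mu(G)$, the count $\epsilon$ is determined without any reference to the particular nested decomposition chosen. This is exactly what upgrades the earlier observation that the even ear count is constant among nested decompositions to the sharper assertion that this common constant equals the global minimum $\varphi(G)$ taken over \emph{all} ear decompositions of $G$.
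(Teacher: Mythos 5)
Your proof is correct and is essentially the paper's own argument: the paper likewise chains the preceding Proposition ($\reg I(X_G) = \frac{|V_G|+\epsilon-1}{2}$), Theorem~\ref{thm: reg for bipartites is max join} ($\reg I(X_G)=\mu(G)$ for bipartite graphs), and Frank's relation \eqref{eq: relation between mu and phi} to force $\epsilon = \varphi(G)$. Your explicit observation that connectedness (guaranteed via Whitney's Theorem, since $G$ carries an ear decomposition) is needed to invoke \eqref{eq: relation between mu and phi} is left implicit in the paper, but it does not alter the route.
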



\begin{thebibliography}{99}

\bibitem{BiO'KVT17} J.~Biermann, A.~O'Keefe and A.~Van Tuyl,
\emph{Bounds on the regularity of toric ideals of graphs},
Adv.~Appl.~Math.~85 (2017), 84--102.


\bibitem{BeHaTr15} S.~Beyarslan, H.~T.~H\`a and T.~N.~Trung, 
\emph{Regularity of powers of forests and cycles}, 
J.~Algebraic Combin.~42 (2015), no.~4, 1077--1095. 


\bibitem{Bo98} B.~Bollob\'as, 
\emph{Modern graph theory}, 
Graduate Texts in Mathematics, 184, Springer-Verlag, New York, 1998. 


\bibitem{Ep92} D.~Eppstein, 
\emph{Parallel Recognition of Series-Parallel Graphs},
Information and Computation, Volume \textbf{98}, Issue 1, (1992), 41--55.


\bibitem{E05} D.~Eisenbud,  
\emph{The geometry of syzygies. A second course in commutative algebra and algebraic geometry},
Graduate Texts in Mathematics, 229, Springer-Verlag, New York, 2005.



\bibitem{EnZa15} V.~Ene and A.~Zarojanu,
\emph{On the regularity of binomial edge ideals},
Math.~Nachr.~288 (2015), no.~1, 19--24. 



\bibitem{Fr93} A.~Frank, 
\emph{Conservative weightings and ear-decompositions of graphs}, 
Combinatorica 13 (1993), no.~1, 65--81. 


\bibitem{GoRe08} M.~Gonz\'alez and C.~Renter\'ia,
\emph{Evaluation codes associated to complete bipartite graphs},
Int.~J.~Algebra, 2 (2008), no. 1--4, 163--170.


\bibitem{GoReSa13} M.~Gonz\'alez, C.~Renter\'ia and E.~Sarmiento,
\emph{Parameterized codes over some embedded sets and their applications to complete graphs},
Math.~Commun., 18 (2013), no.~2, 377--391.


\bibitem{HaBeO'K19} H.~T.~H\`a, S.~K.~Beyarslan and A.~O'Keefe, 
\emph{Algebraic properties of toric rings of graphs},  
Comm.~Algebra, 47 (2019), no. 1, 1--16. 


\bibitem{HaVT08} H.~T.~H\`a and A.~Van~Tuyl,
\emph{Monomial ideals, edge ideals of hypergraphs, and their graded Betti numbers},
J.~Algebraic~Combin.~\textbf{27} (2008), no.~2, 215--24.


\bibitem{HiHiKiOK15} T.~Hibi, A.~Higashitani, K.~Kimura and A.~O'Keefe, 
\emph{Algebraic study on Cameron-Walker graphs},
J.~Algebra 422 (2015), 257--269. 


\bibitem{HiHiKiTs16} T.~Hibi, A.~Higashitani, K.~Kimura and A.~Tsuchiya,
\emph{Dominating induced matchings of finite graphs and regularity of edge ideals}, 
J.~Algebraic Combin.~43 (2016), no.~1, 173--198.


\bibitem{Hig17} H.~Higashidaira, 
\emph{On the sequentially Cohen-Macaulay properties of almost complete multipartite graphs.}, 
Comm.~Algebra 45 (2017), no.~6, 2478--2493. 


\bibitem{M2} D.~R.~Grayson and M.~E.~Stillman, 
\emph{Macaulay2, a software system for research in algebraic geometry},
{Available at \url{http://www.math.uiuc.edu/Macaulay2/}}.


\bibitem{Ka06} M.~Katzman, 
\emph{Characteristic-independence of Betti numbers of graph ideals},  
J.~Combin.~Theory Ser.~A 113 (2006), no.~3, 435--454.


\bibitem{KiSM13} D.~Kiani and S.~Saeedi Madani, 
\emph{On the binomial edge ideal of a pair of graphs}, 
Electron.~J.~Combin.~20 (2013), no. 1, Paper 48, 13 pp. 


\bibitem{KiSM16} D.~Kiani and S.~Saeedi Madani,
\emph{The Castelnuovo-Mumford regularity of binomial edge ideals}, 
J.~Combin.~Theory Ser.~A 139 (2016), 80--86. 


\bibitem{KiSM18} D.~Kiani and S.~Saeedi Madani,
\emph{Binomial edge ideals of regularity 3.}, 
J.~Algebra 515 (2018), 157--172. 


\bibitem{KhMo14} F.~Khosh-Ahang and S.~Moradi, 
\emph{Regularity and projective dimension of the edge ideal of $C_5$-free vertex decomposable graphs},
Proc.~Amer.~Math.~Soc.~142 (2014), no.~5, 1567--1576. 


\bibitem{MaNeVPVi} A.~Macchia, J.~Neves, M.~Vaz Pinto and R.~H.~Villarreal,
\emph{Regularity of the vanishing ideal over a parallel composition of paths},
J.~Commut.~Algebra (to appear).


\bibitem{MaMoCrRiTeYa11} M.~Mohammad, A.~Mousivand, M.~Crupi, G.~Rinaldo, N.~Terai and S.~Yassemi,
\emph{Vertex decomposability and regularity of very well-covered graphs}, 
J.~Pure Appl.~Algebra 215 (2011), no.~10, 2473--2480. 


\bibitem{MaMu13} K.~Matsuda and S.~Murai,
\emph{Regularity bounds for binomial edge ideals}, 
J.~Commut.~Algebra, 5, (2013), no.~1, 141--149.
 


\bibitem{Ne} J.~Neves,
\emph{Regularity of the vanishing ideal over a bipartite nested ear decomposition}, 
J.~Algebra Appl., 28 pp.~(to appear).


\bibitem{NeVP14} J.~Neves and M.~Vaz Pinto, 
\emph{Vanishing ideals over complete multipartite graphs},
J.~Pure Appl.~Algebra, 218 (2014), 1084--1094.


\bibitem{NeVPVi15} J.~Neves, M.~Vaz Pinto and R.~H.~Villarreal,
\emph{Vanishing ideals over graphs and even cycles},
Comm. Algebra, Vol.~43, Issue 3, (2015) 1050--1075.


\bibitem{NeVPVi14} J.~Neves, M.~Vaz Pinto and R.~H.~Villarreal,
\emph{Regularity and algebraic properties of certain lattice ideals},
Bull. Braz. Math. Soc., Vol. 45, N. 4, (2014) 777--806.


\bibitem{ReSiVi11} C.~Renter\'\i a, A.~Simis and R.~H.~Villarreal,
\emph{Algebraic methods for parameterized codes and invariants of vanishing ideals over finite fields}, 
Finite Fields Appl., 17 (2011), no. 1, 81--104.


\bibitem{SFYa18} S.~A.~Seyed Fakhari, S.~Yassemi, 
\emph{Improved bounds for the regularity of edge ideals of graphs}, 
Collect.~Math.~69 (2018), no.~2, 249--262.


\bibitem{SaVPVi11} E.~Sarmiento, M.~Vaz Pinto and R.~H.~Villarreal,
\emph{The minimum distance of parameterized codes on projective tori},
Appl.~Algebra Engrg.~Comm.~Comput.~22 (2011), no.~4, 249--264.


\bibitem{SoZa93} P.~Sol\'e and T.~Zaslavsky, 
\emph{The covering radius of the cycle code of a graph}, 
Discrete Appl.~Math.~45 (1993), no.~1, 63--70. 


\bibitem{VPVi13} M.~Vaz Pinto and R.~H.~Villarreal,
\emph{The degree and regularity of vanishing ideals of algebraic toric sets over finite fields},
Comm. Algebra, 41 (2013), no. 9, 3376--3396.


\bibitem{VT09} A.~Van Tuyl,
\emph{Sequentially Cohen-Macaulay bipartite graphs: vertex decomposability and regularity}, 
Arch.~Math.~(Basel) 93 (2009), no.~5, 451--459. 


\bibitem{monalg} R.~H.~Villarreal,
\emph{Monomial Algebras},
Second edition. Monographs and Research Notes in Mathematics. CRC Press, Boca Raton, FL, 2015. 


\bibitem{Wo14} R.~Woodroofe,
\emph{Matchings, coverings, and Castelnuovo--Mumford regularity},
J.~Commut.~Algebra~\textbf{6} (2014), no.~2, 287--304.


\end{thebibliography}
\end{document}